\newtheorem{thm}{Theorem}[section]
\newtheorem{lem}[thm]{Lemma}
\newtheorem{defn}[thm]{Definition}
\newtheorem{prop}[thm]{Proposition}
\newtheorem{cor}[thm]{Corollary}
\newtheorem{rmk}[]{Remark}
\newcommand{\be}{\begin{eqnarray}}
\newcommand{\ee}{\end{eqnarray}}
\newcommand{\ben}{\begin{eqnarray*}}
\newcommand{\een}{\end{eqnarray*}}
\newcommand{\beal}{\begin{aligned}}
\newcommand{\enal}{\end{aligned}}
\newcommand{\beq}{\begin{equation}}
\newcommand{\eeq}{\end{equation}}
\newcommand{\eps}{\varepsilon}
\newcommand{\T}{\mathbb{T}}
\newcommand{\R}{\mathbb{R}}
\newcommand{\cC}{\mathcal{C}}
\def\namedlabel#1#2{\begingroup
    #2%
    \def\@currentlabel{#2}%
    \phantomsection\label{#1}\endgroup
}
\title{\textsc{On the regularity of stochastic effective Hamiltonian}}
 \author{Son N.T. Tu$^*$ and Jianlu Zhang$^\dagger$}
 \address{$^\dagger$Hua Loo-Keng Key Laboratory of Mathematics \& Mathematics Institute\\Academy of Mathematics and systems science\\Chinese Academy of Sciences, Beijing 100190, China}
 \email{jellychung1987@gmail.com}
 \address{$^*$Department of Mathematics, Michigan State University\\
 East Lansing, Michigan 48824, USA}
     \email{tuson@msu.edu}
\thanks{$\dagger$ {\it Statements and Declarations: }The authors declare no competing interests.\\ The work of Son Tu is supported in part by the NSF DMS grant 2204722. The work of Jianlu Zhang is supported by the National Key R\&D Program of China
(No. 2022YFA1007500) and the National Natural Science Foundation of China (No. 12231010). }
\subjclass[2020]{
35D40, 
70H20, 
35J60, 
37J40, 
49L25 
37K99, 
}
\keywords{viscosity solution, Hamilton-Jacobi equations, effective Hamiltonian, Mather measure}
\date{\today}
\begin{document}

\begin{abstract} 
In this paper, we study the regularity of the ergodic constants for the viscous Hamilton--Jacobi equations. We also estimate the convergence rate of the ergodic constant in the vanishing viscosity process. 
\end{abstract}

\maketitle

\section{Introduction}
Let $\mathbb{T}^n=\mathbb{R}^n/ \mathbb{Z}^n$ be the flat torus. Under some assumptions on the Hamiltonian $H(x,\xi):\mathbb{T}^n \times \mathbb{R}^n\to \mathbb{R}$, to each $p\in\mathbb{R}^n$ there exists a \emph{unique} constant $\overline{H}(p)$ such that the following cell (or ergodic) problem
\begin{equation}\label{eq:Intro:cell-ergodic}
    H(x,p+Du(x)) = \overline{H}(p), \qquad 
x\in\mathbb{T}^n
\end{equation}
can be solved with a viscosity solution $u\in C(\mathbb{T}^n,\R)$. As the elliptic regularization of \eqref{eq:Intro:cell-ergodic}, there exists a unique constant $\overline{H}^\varepsilon(p)$ for every $\varepsilon>0$ such that
\begin{equation}\label{eq:Intro:cell-ergodic-2nd-order}
    H(x, p + Du^\varepsilon(x)) - \varepsilon \Delta u^\varepsilon(x) = \overline{H}^\varepsilon( p), \qquad x\in \mathbb{T}^n
\end{equation}
can be solved by a solution $u^\eps\in C(\mathbb{T}^n,\R)$. As is known, solutions of \eqref{eq:Intro:cell-ergodic} are not unique even up to additive constants \cite{le_dynamical_2017, tran_hamilton-jacobi_2021}, whereas solutions to \eqref{eq:Intro:cell-ergodic-2nd-order} are unique up to adding a constant \cite{lasry_nonlinear_1989}. In the vanishing viscosity process, i.e. $\eps\rightarrow 0^+$, 
the convergence of $u^\varepsilon\to u$ in the full sequence remains unknown, except for special cases \cite{anantharaman_physical_2005, bessi_2003_cmp}. On the other hand, for any fixed $p\in\R^n$, it has been shown in \cite{gomes_stochastic_2002, IMT1} that 
 $\overline{H}^\varepsilon(p) \to \overline{H}(p)$ as $\varepsilon\to 0^+$. Furthermore, the rate of convergence is of order $\mathcal{O}(\varepsilon^{1/2})$ for general nonconvex Hamiltonians \cite[Proposition 5.5]{le_dynamical_2017}. For convex Hamiltonians, the ergodic constant \( \overline{H}(p) \) is of greater interest to experts in dynamical systems, where it is referred to as Mather's \( \alpha \)-function due to its variational meaning in terms of Mather measures \cite{mather_action_1991,mane_generic_1996}. In general, \( p\mapsto \overline{H}(p) \) is only locally Lipschitz in \(\mathbb{R}^n \), although \( p\mapsto \overline{H}^\varepsilon(p) \) is smooth for \( \varepsilon > 0 \) \cite{iturriaga_hector_2005}.

In this paper, we examine the regularity of \( \overline{H}^\varepsilon(p) \) with respect to $\varepsilon$ and $p$ for equation \eqref{eq:Intro:cell-ergodic-2nd-order} with convex Hamiltonians. 
Making use of the scaling structure of Mather measures, we provide a formula for the directional derivative of $\varepsilon \mapsto \overline{H}^\varepsilon(p)$, which can be further applied to obtain the convergent rate $\mathcal{O}(\varepsilon)$ of $\overline{H}^\varepsilon(p) \to \overline{H}(p)$ as $\varepsilon \to 0^+$. Additionally, new formulas for the directional derivatives of \(p \mapsto \overline{H}(p)\) are derived using similar arguments, without assuming the differentiability of \(\overline{H}(p)\). The main contribution comes from utilizing the scaling structure of Mather measures, which offers a new approach to the problem.

\subsection{Assumptions}  We state the main assumptions on the Hamiltonian \(H\) used in this paper. 
\begin{description}[style=multiline, labelwidth=1cm, leftmargin=2cm]
    \item[\namedlabel{itm:assumptions-p1}{$(\mathcal{H}_1)$}] 
        $H(x,\xi)\in C^2(\mathbb{T}^n \times \mathbb{R}^n)$, is superlinear in $\xi$, i.e., $\inf_{x\in\T^n} \frac{H(x,\xi)}{|\xi|}\to +\infty$ as $|\xi|\to \infty$, and $D_{\xi\xi}H(x,\xi)$ is positive definite in $\xi$.

      \item[\namedlabel{itm:assumptions-p2}{$(\mathcal{H}_2)$}] 
    \begin{equation}\label{eq:assumption-coercive-bernstein}
	    	\lim_{|\xi|\to \infty} \inf_{x\in \mathbb{T}^n} \left(\frac{1}{2}H(x,\xi)^2 + D_xH(x,\xi)\cdot \xi \right) = +\infty.
    \end{equation}
\end{description}
Hamiltonians satisfying \ref{itm:assumptions-p1} are known as Tonelli Hamiltonians. Assumption \ref{itm:assumptions-p2} is a growth condition typically required for applying the Bernstein method to obtain gradient bounds for solutions to \eqref{eq:Intro:cell-ergodic-2nd-order}. Under \ref{itm:assumptions-p1}, the Lagrangian \(L \in C^2(\mathbb{T}^n \times \mathbb{R}^n)\) is well-defined, convex, and superlinear in \(v\), and \(D_{vv}L(x,v)\) is positive definite for \(v \in \mathbb{R}^n\), where \(L\) is defined by the Legendre transform:
\begin{equation*}
    L(x,v) := \sup_{\xi \in \mathbb{R}^n} (\xi \cdot v - H(x,\xi)), \qquad (x,v)\in \T^n\times \R^n. 
\end{equation*}

\subsection{Main results}

\begin{thm}\label{thm:regularity-eps} Assume \ref{itm:assumptions-p1} and \ref{itm:assumptions-p2}. Let $c(\varepsilon) := \overline{H}^\varepsilon(0)$, and let $u^\varepsilon$ be any solution to \eqref{eq:Intro:cell-ergodic-2nd-order} with $p=0$. 
\begin{itemize}
	\item[$\mathrm{(i)}$] 
            The function $\varepsilon\mapsto c(\varepsilon)$ is $C^1-$smooth in $\eps>0$ with
    	\begin{equation}\label{eq:formula-c'-eps}
    	\begin{aligned}
    			c'(\varepsilon)  
    			=  -\int_{\mathbb{T}^n \times \mathbb{R}^n} \Delta u^\varepsilon(x)\;d\mu \qquad\text{for all}\;\mu\in \mathcal{M}_0(\varepsilon), 
    	\end{aligned}
    	\end{equation}
	    where $\mathcal{M}_0(\varepsilon)$ is the set of Mather 
            measures associated with \eqref{eq:Intro:cell-ergodic-2nd-order} with $p=0$.
	
	\item[$\mathrm{(ii)}$]

            The function \(\varepsilon \mapsto c(\varepsilon)\) is semiconvex on \((0,1)\) with modulus \(C\varepsilon^{-2}\) for some \(C\) independent of \(\varepsilon\). In particular, \(\varepsilon \mapsto c(\varepsilon)\) is twice differentiable almost everywhere on \((0,1)\), with \(c''(\varepsilon) \geq -C\varepsilon^{-2}\) whenever \(c''(\varepsilon)\) exists. 
            
	\item[$\mathrm{(iii)}$] 
            The function $\varepsilon\mapsto c(\varepsilon)$ is uniformly Lipschitz for $\eps\in [0,1]$, i.e., there exists $C'>0$ independent of $\varepsilon\in (0,1)$, such that
            \begin{equation}\label{eq:corollary:Lipschizt-c-eps-c-0}
            	\big|c(\varepsilon) - c(0)\big|\leq C'\varepsilon.
            \end{equation}
\end{itemize}
\end{thm}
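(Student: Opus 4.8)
The three parts rest on two $\varepsilon$-uniform a priori estimates together with the variational (Mather-measure) description of $\overline H^\varepsilon$. The first is the gradient bound $\|Du^\varepsilon\|_{L^\infty}\le C$, for which assumption \ref{itm:assumptions-p2} is tailor-made via the Bernstein method; the second is the one-sided Hessian bound $D^2u^\varepsilon\le KI$ with $K$ independent of $\varepsilon$, obtained by differentiating \eqref{eq:Intro:cell-ergodic-2nd-order} twice in a direction $e$, evaluating at a maximum of the top eigenvalue of $D^2u^\varepsilon$, and absorbing the favorable term $-D_{\xi\xi}H[Du^\varepsilon_e,Du^\varepsilon_e]\le 0$ against the lower-order terms controlled by the gradient bound. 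I also use the invariant measure $m^\varepsilon>0$, the unique mass-one solution of $-\mathrm{div}(D_\xi H(x,Du^\varepsilon)m^\varepsilon)-\varepsilon\Delta m^\varepsilon=0$, whose phase-space lift $d\mu=m^\varepsilon\,dx$ under $x\mapsto(x,D_\xi H(x,Du^\varepsilon))$ is the Mather measure in $\cM_0(\varepsilon)$; uniqueness of $m^\varepsilon$ (strong maximum principle) and of $Du^\varepsilon$ render the formulas below independent of all choices. For part $\mathrm{(i)}$ I would run the adjoint method: differentiating $H(x,Du^\varepsilon)-\varepsilon\Delta u^\varepsilon=c(\varepsilon)$ in $\varepsilon$ gives $D_\xi H\cdot D(\partial_\varepsilon u^\varepsilon)-\varepsilon\Delta(\partial_\varepsilon u^\varepsilon)-\Delta u^\varepsilon=c'(\varepsilon)$, and integrating against $m^\varepsilon$ annihilates every term containing $\partial_\varepsilon u^\varepsilon$, leaving $c'(\varepsilon)=-\int\Delta u^\varepsilon\,d\mu$. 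The Fredholm alternative for the linearized operator (kernel the constants, cokernel spanned by $m^\varepsilon$) is exactly what forces this value of $c'(\varepsilon)$, and continuity of the right-hand side yields $c\in C^1$; one can bypass the $C^1$-in-$\varepsilon$ regularity of $u^\varepsilon$ by testing the same identity on the difference quotient $z=u^{\varepsilon+h}-u^\varepsilon$, where the leading term cancels and the error is $\cO(\|Dz\|^2/h)=\cO(h)$ once $\|u^{\varepsilon+h}-u^\varepsilon\|_{C^1}\le Ch$.

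Part $\mathrm{(ii)}$ is where the scaling structure enters. Writing $\cC_\varepsilon$ for the set of $\varepsilon$-closed (holonomic) measures, characterized by $\int(v\cdot D\phi+\varepsilon\Delta\phi)\,d\mu=0$ for all $\phi\in C^2(\T^n)$, the dilation $T_\varepsilon(x,v)=(x,\varepsilon v)$ satisfies $(T_\varepsilon)_\#\cC_1=\cC_\varepsilon$, so the Mather formula reads $c(\varepsilon)=-\min_{\mu\in\cC_\varepsilon}\int L\,d\mu=\sup_{\nu\in\cC_1}\bigl(-\int L(x,\varepsilon v)\,d\nu\bigr)$ over a \emph{fixed} constraint set. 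Each $g_\nu(\varepsilon):=-\int L(x,\varepsilon v)\,d\nu$ is concave since $\partial^2_\varepsilon L(x,\varepsilon v)=v^\top D_{vv}L(x,\varepsilon v)v\ge 0$, so $c$ is a supremum of concave functions. At any $\varepsilon_0$ the maximizer $\nu_{\varepsilon_0}$ yields a $C^2$ lower support $\psi=g_{\nu_{\varepsilon_0}}\le c$ with $\psi(\varepsilon_0)=c(\varepsilon_0)$; the gradient bound forces $\varepsilon_0 v=D_\xi H(x,Du^{\varepsilon_0})$ to be bounded on $\mathrm{supp}\,\nu_{\varepsilon_0}$, hence $|v|\le C\varepsilon_0^{-1}$, and therefore $\psi''(\varepsilon)=-\int v^\top D_{vv}L(x,\varepsilon v)v\,d\nu_{\varepsilon_0}\ge -C\varepsilon_0^{-2}$ near $\varepsilon_0$. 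Possessing at every point a lower support with second derivative $\ge -C\varepsilon^{-2}$ is precisely semiconvexity with modulus $C\varepsilon^{-2}$; Alexandrov's theorem then gives a.e.\ twice differentiability and $c''\ge -C\varepsilon^{-2}$.

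Part $\mathrm{(iii)}$ I would prove directly and two-sidedly by Fenchel duality against Mather measures, which produces the sharp $\cO(\varepsilon)$ in place of the generic $\cO(\varepsilon^{1/2})$. For the upper bound, combine Young's inequality $L(x,v)\ge Du^0\cdot v-H(x,Du^0)=Du^0\cdot v-c(0)$ (with $u^0$ any inviscid solution) and integration against an arbitrary $\mu\in\cC_\varepsilon$: testing the closedness relation with $\phi=u^0$ gives $\int Du^0\cdot v\,d\mu=-\varepsilon\int\Delta u^0\,d\mu\ge -\varepsilon nK$ by semiconcavity, so $\int L\,d\mu\ge -c(0)-\varepsilon nK$, and minimizing over $\cC_\varepsilon$ together with $c(\varepsilon)=-\min_{\cC_\varepsilon}\int L$ yields $c(\varepsilon)\le c(0)+nK\varepsilon$. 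For the lower bound, symmetrically use $L(x,v)\ge Du^\varepsilon\cdot v-H(x,Du^\varepsilon)=Du^\varepsilon\cdot v-c(\varepsilon)-\varepsilon\Delta u^\varepsilon$ and integrate against the inviscid Mather measure $\mu_0\in\cC_0$, for which $\int Du^\varepsilon\cdot v\,d\mu_0=0$ and $\int L\,d\mu_0=-c(0)$; uniform semiconcavity $\Delta u^\varepsilon\le nK$ then gives $-c(0)\ge -c(\varepsilon)-\varepsilon nK$, i.e.\ $c(\varepsilon)\ge c(0)-nK\varepsilon$. Both halves use only $\Delta(\cdot)\le nK$ with the favorable sign and combine to \eqref{eq:corollary:Lipschizt-c-eps-c-0} after recalling $c(0)=\lim_{\varepsilon\to 0^+}c(\varepsilon)$.

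The main obstacle is the $\varepsilon$-uniform semiconcavity estimate $D^2u^\varepsilon\le KI$, which drives everything (the velocity bound in $\mathrm{(ii)}$ and both inequalities in $\mathrm{(iii)}$), together with the low regularity of the inviscid solution $u^0$ in the upper bound: since $u^0$ is only semiconcave, the duality computation must be carried out on mollifications $u^0_\delta=u^0*\rho_\delta$, exploiting that mollification preserves the semiconcavity constant and that convexity of $H$ in $\xi$ gives $H(x,Du^0_\delta)\le c(0)+\omega(\delta)$ (with $\omega$ the $x$-modulus of $H$), before letting $\delta\to 0$. A secondary technical point is the $C^1$-dependence of $u^\varepsilon$ on $\varepsilon$ needed to make the adjoint computation in $\mathrm{(i)}$ rigorous, which the difference-quotient variant above is designed to circumvent.
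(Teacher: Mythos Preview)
Your approach to parts (i) and (ii) is sound and close to the paper's. For (i), the adjoint computation you describe is precisely the alternative recorded in Remark~\ref{remark:different-way-c'-eps}; the paper's primary argument instead works on the Lagrangian side, using the scaling $\mu\mapsto\mu^\lambda$ (Lemma~\ref{lem:scale-v-stochastic}) to obtain one-sided derivatives (Lemma~\ref{lem:one-sided}) and then equating them via the uniqueness of the stochastic Mather measure (Corollary~\ref{cor:SingletonM0}). For (ii), your representation $c(\varepsilon)=\sup_{\nu\in\cC(1)}\bigl(-\int L(x,\varepsilon v)\,d\nu\bigr)$ is exactly the scaling structure of Lemma~\ref{lem:semiconvex-c-eps}, just rephrased as a supremum over a fixed constraint set; the bound $|v|\le C\varepsilon^{-1}$ on the support of the maximizer and the resulting modulus $C\varepsilon^{-2}$ match.

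Part (iii), however, has a genuine gap hidden by a sign error. The closedness relation in Definition~\ref{defn:HolonomicMeasures} reads $\int(v\cdot D\phi-\varepsilon\Delta\phi)\,d\mu=0$, so $\int v\cdot D\phi\,d\mu=+\varepsilon\int\Delta\phi\,d\mu$, not $-\varepsilon$. With the correct sign your upper-bound chain becomes, for $\mu\in\cM_0(\varepsilon)$,
\[
-c(\varepsilon)=\int L\,d\mu \ \ge\ \int Du^0_\delta\cdot v\,d\mu - c(0)-\omega(\delta)\ =\ \varepsilon\int\Delta u^0_\delta\,d\mu - c(0)-\omega(\delta),
\]
hence $c(\varepsilon)\le c(0)+\omega(\delta)-\varepsilon\int\Delta u^0_\delta\,d\mu$. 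To conclude $c(\varepsilon)\le c(0)+C'\varepsilon$ you would need $\int\Delta u^0_\delta\,d\mu\ge -C$, i.e.\ a \emph{lower} bound on $\Delta u^0$; semiconcavity only gives the upper bound $\Delta u^0\le nK$. Your lower bound $c(\varepsilon)\ge c(0)-nK\varepsilon$ is correct, since there the favorable sign does appear (one integrates $\Delta u^\varepsilon\le nK$ against $\mu_0\in\cC(0)$). One-sided $L^\infty$ semiconcavity can thus deliver only one of the two Lipschitz inequalities.

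The paper sidesteps this asymmetry and never uses a pointwise Hessian bound. Instead it differentiates the equation twice in $x_i$ and integrates the resulting identity against $\mu\in\cM_0(\varepsilon)$: the transport term $D_pH\cdot Du^\varepsilon_{x_ix_i}-\varepsilon\Delta u^\varepsilon_{x_ix_i}$ vanishes by closedness, the strictly convex term contributes $\gamma\int|Du^\varepsilon_{x_i}|^2\,d\mu$, and the cross terms are absorbed by Cauchy--Schwarz using only the gradient bound. This produces the \emph{two-sided} $L^2(d\mu)$ estimate $\int|D^2u^\varepsilon|^2\,d\mu\le C$, whence $|c'(\varepsilon)|=\bigl|\int\Delta u^\varepsilon\,d\mu\bigr|\le C$ uniformly in $\varepsilon$ (Lemma~\ref{lem:boundedness-1st-derivative-c-eps}); integration then gives \eqref{eq:corollary:Lipschizt-c-eps-c-0}. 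In particular, the $\varepsilon$-uniform semiconcavity $D^2u^\varepsilon\le KI$ that you flag as the ``main obstacle'' is not needed anywhere in the paper's argument.
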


For certain Hamiltonians (e.g., classical mechanical Hamiltonians), the rate \(\mathcal{O}(\varepsilon)\) in \eqref{eq:corollary:Lipschizt-c-eps-c-0} was previously obtained \cite{anantharaman_physical_2005, evans_towards_2004, yu_yifeng_2007_proceeding_ams_a_remark_on}. In these cases, the vanishing viscosity corresponds to the limit from quantum mechanics to classical mechanics as sending the Planck constant to $0$, therefore it is of great interest. In our case we remove these restrictions. We refer to Remark \ref{remark:different-way-c'-eps} for an alternative perspective on \eqref{eq:formula-c'-eps}. 

\begin{thm}\label{thm:direction-derivatives-in-p} Assume \ref{itm:assumptions-p1} and \ref{itm:assumptions-p2}. Then $p\mapsto \overline{H}(p)$ has one-sided directional derivatives in any direction $\xi\in \mathbb{R}^n$, and
	\begin{equation}\label{eq:DpH-bar-0-mu-one-sided}
		D_{\xi+}\overline{H}(p) = \max_{\mu\in \mathcal{M}_p(0)}\int_{\mathbb{T}^n\times \mathbb{R}^n}v\cdot \xi\;d\mu(x,v) \qquad \text{and}\qquad  D_{\xi-}\overline{H}(p) = \min_{\mu\in \mathcal{M}_p(0)}\int_{\mathbb{T}^n\times \mathbb{R}^n}v\cdot \xi\;d\mu(x,v).
	\end{equation}
	Here $\mathcal{M}_p(0)$ is the set of Mather measures associated to \eqref{eq:Intro:cell-ergodic-2nd-order} with $\varepsilon=0$. In particular, if $\mathcal{M}_p(0) = \{\mu\}$ is a singleton, then $p\mapsto \overline{H}(p)$ is differentiable at $p$, with
\begin{equation*}
	D\overline{H}(p) = \int_{\mathbb{T}^n\times \mathbb{R}^n} v\;d\mu(x,v).
\end{equation*}	
\end{thm}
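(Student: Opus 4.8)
The plan is to derive both one-sided directional derivatives directly from the variational (Mather) characterization of $\overline{H}$, exploiting the fact that the feasible set in this characterization does not depend on $p$. First I would recall the duality formula
\[
\overline{H}(p) = \max_{\mu\in\mathcal{C}}\left( p\cdot\int_{\mathbb{T}^n\times\mathbb{R}^n} v\,d\mu - \int_{\mathbb{T}^n\times\mathbb{R}^n} L(x,v)\,d\mu\right),
\]
where $\mathcal{C}$ is the set of closed (holonomic) probability measures on $\mathbb{T}^n\times\mathbb{R}^n$, and the maximizers are precisely the Mather measures $\mathcal{M}_p(0)$. The crucial point is that $\mathcal{C}$ is independent of $p$, so $\overline{H}$ is a supremum of functions that are affine in $p$; in particular it is convex, and only the linear objective changes as $p$ varies.

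For the lower bound on the forward difference quotient, fix any $\mu\in\mathcal{M}_p(0)$. Since $\mu\in\mathcal{C}$ is feasible at $p+t\xi$, inserting it into the formula gives
\[
\overline{H}(p+t\xi) - \overline{H}(p) \ge t\int_{\mathbb{T}^n\times\mathbb{R}^n} v\cdot\xi\,d\mu \qquad (t>0),
\]
so that $\liminf_{t\to0^+}\tfrac1t\big(\overline{H}(p+t\xi)-\overline{H}(p)\big)\ge \max_{\mu\in\mathcal{M}_p(0)}\int v\cdot\xi\,d\mu$. For the matching upper bound, choose a Mather measure $\mu_t\in\mathcal{M}_{p+t\xi}(0)$; feasibility of $\mu_t$ at $p$ yields
\[
\overline{H}(p+t\xi) - \overline{H}(p) \le t\int_{\mathbb{T}^n\times\mathbb{R}^n} v\cdot\xi\,d\mu_t,
\]
whence $\limsup_{t\to0^+}\tfrac1t\big(\overline{H}(p+t\xi)-\overline{H}(p)\big)\le \limsup_{t\to0^+}\int v\cdot\xi\,d\mu_t$. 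The backward derivative is handled identically after replacing $\xi$ by $-\xi$, which produces the minimum in place of the maximum.

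The main obstacle is to control $\limsup_{t\to0^+}\int v\cdot\xi\,d\mu_t$, i.e.\ to prove upper semicontinuity of $p\mapsto\mathcal{M}_p(0)$: every weak-$*$ subsequential limit of $\mu_t$ as $t\to0^+$ must be a Mather measure at $p$. I would establish this in two steps. First, using superlinearity of $L$ together with the Tonelli and coercivity hypotheses \ref{itm:assumptions-p1}--\ref{itm:assumptions-p2}, I would obtain a uniform-in-$t$ bound on $\int|v|\,d\mu_t$ (equivalently, a uniform bound on the supports), which yields weak-$*$ precompactness. Second, I would pass to the limit in the defining relations: the closedness condition $\int v\cdot D\phi\,d\mu=0$ and the minimality of the action $\int(L-p\cdot v)\,d\mu$ are both stable under weak-$*$ convergence once the bound on $\int|v|$ rules out escape of mass to infinity. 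This gives $\limsup_{t\to0^+}\int v\cdot\xi\,d\mu_t\le\max_{\mu\in\mathcal{M}_p(0)}\int v\cdot\xi\,d\mu$, closing the gap with the lower bound and proving the formula for $D_{\xi+}\overline{H}(p)$; the formula for $D_{\xi-}\overline{H}(p)$ follows symmetrically.

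Finally, the singleton case is immediate: if $\mathcal{M}_p(0)=\{\mu\}$ then $D_{\xi+}\overline{H}(p)=D_{\xi-}\overline{H}(p)=\int v\cdot\xi\,d\mu$ for every $\xi$, so the one-sided derivatives agree in all directions and depend linearly on $\xi$; hence $\overline{H}$ is differentiable at $p$ with $D\overline{H}(p)=\int v\,d\mu$.
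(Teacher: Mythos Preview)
Your proposal is correct and follows essentially the same route as the paper: the same two inequalities from the Mather variational formula, the same compactness/stability argument to pass to the limit in $\mu_t$, and the same symmetry for the left derivative. The only cosmetic differences are that the paper bounds the \emph{supports} of $\mu_t$ (via Lemma \ref{lem:MatherEpsZeroSupport}) rather than just $\int|v|\,d\mu_t$, and in the singleton case the paper argues via the subgradient of the convex function $\overline{H}$ rather than directly from linearity of $\xi\mapsto D_\xi\overline{H}(p)$---both amount to the same thing once convexity is in hand.
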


For $\varepsilon>0$, $p\mapsto\overline{H}^\varepsilon(p)$ has been proved to be smooth in $\R^n$ \cite{iturriaga_hector_2005}. Applying Theorem \ref{thm:direction-derivatives-in-p} (the results are actually true for $\varepsilon>0$ in a similar manner) to such a case, we instantly get 
\begin{equation}\label{eq:DpH-bar-eps-mu}
		D\overline{H}^\varepsilon(p) = \int_{\mathbb{T}^n\times\mathbb{R}^n} v\;d\mu(x,v), \qquad\mu\in \mathcal{M}_p(\varepsilon).
\end{equation}
where $\mathcal{M}_p(\varepsilon)$ is the set of Mather measures associated to \eqref{eq:Intro:cell-ergodic-2nd-order}.
The result is obtained through a purely scaling application of Mather measures, as a byproduct of the approach outlined in part (i) of Theorem \ref{thm:regularity-eps}. 

\subsection{Literature} As mentioned earlier, the vanishing viscosity limit from the solution of \eqref{eq:Intro:cell-ergodic-2nd-order} to \eqref{eq:Intro:cell-ergodic} is still a widely open problem, with limited understanding even in the one-dimensional case (convergence is proven under restrictive assumptions in \cite{anantharaman_physical_2005, bessi_2003_cmp}). The study of ergodic constant in view of homogenization for fully nonlinear equation was initiated by Lions, Papanicolaou and Varadhan in 1987.  In their unpublished paper \cite{LPV}, they firstly revealed the existence of 
$\overline{H}(p)$ such that \eqref{eq:Intro:cell-ergodic} solvable. Meanwhile, for Tonelli Hamiltonians, Mather and Ma\~n\'e in \cite{mather_action_1991,mane_generic_1996} propose a dynamical interpretation of the ergodic constant by
\begin{equation}\label{eq:Intro:minimization-problem}
 	-\overline{H}(p) =  \min_{\nu\in \cC} \int_{T\mathbb{T}^n} \Big(L(x,v)-p\cdot v\Big)\;d\nu(x,v),
\end{equation}
where $L(x,v): \T^n\times \mathbb{R}^n\rightarrow\R$ is the Lagrangian, and $\mathcal{C}$ is the set of {\it holonomic measures} (Definition \ref{defn:HolonomicMeasures}). Typically, \(p \mapsto \overline{H}(p)\) is not differentiable, although differentiability is often thought to signal the integrability of the associated Hamiltonian system (see \cite{bolotin_2003} for details). For more relevant works on \(\overline{H}(p)\), see \cite{evans_gomes_2001_effective_I} and the references therein.

For $\varepsilon>0$, the analog of ergodic constant $p\mapsto \overline{H}^\varepsilon(p)$ and the stochastic analog of Mather measures were established in \cite{gomes_stochastic_2002, iturriaga_hector_2005}. Such a case has fine properties, e.g. the stochastic Mather measure is unique for $\eps>0$, and  $p\mapsto \overline{H}^\varepsilon(p)$ is strictly convex and differentiable. The study of the limit $\varepsilon\mapsto \overline{H}^\varepsilon(0)$ appears naturally in the context of vanishing viscosity process. Additionally, an alternative approach for defining stochastic Mather measures under various boundary conditions using duality is developed in \cite{IMT1, IMT2}. In the nonconvex setting, a notion of stochastic Mather measures is introduced in \cite{CagnettiGomesTran2011} using the nonlinear adjoint method, originally developed in \cite{evans_2010_adjoint_and_compensated} and \cite{tran_adjoint_2011}.

In the exploration of problems related to \eqref{eq:Intro:cell-ergodic-2nd-order}, Mather measures can be employed to provide specific criteria for the limits of sequences of Mather measures and viscosity solutions, as demonstrated in \cite{gomes_2008_selection_advcalvar} and \cite{mitake_tran_2017_selection_advance_math}. Our approach, involving scaling measures, shares a similar spirit, and it has been applied in related studies on changing domains, as demonstrated in \cite{bozorgnia_regularity_2024,tu_generalized_2024, tu_vanishing_2022}. From a different perspective, the related question of the regularity issue under perturbation of the Hamiltonian is considered in \cite{gomes_perturbation_2003, aguiar_gomes_regularity_2003}.

\subsection{Organization} Our paper is organized as follows: Section \ref{section:preliminaries} covers preliminary facts about stochastic Mather measures. The proofs of Theorems \ref{thm:regularity-eps} and \ref{thm:direction-derivatives-in-p} are in Sections \ref{section:regularity-eps} and \ref{section:one-sided-regularity-0}, respectively.

\section{Preliminaries}\label{section:preliminaries}

We recall the notion of stochastic Mather measures introduced in \cite{gomes_stochastic_2002}, incorporating certain adjustments for our situation, as detailed below.

\begin{defn} \label{defn:HolonomicMeasures} Let \(\mathcal{P}(\mathbb{T}^n \times \mathbb{R}^n)\) denote the set of probability measures on \(\mathbb{T}^n \times \mathbb{R}^n\). A measure \(\mu \in \mathcal{P}(\mathbb{T}^n \times \mathbb{R}^n)\) is called a \emph{holonomic measure} if 
\begin{equation*}
	\int_{\mathbb{T}^n \times \mathbb{R}^n} |v|\;d\mu(x,v) < \infty,
\end{equation*}
where $|\cdot|$ is the Euclidean norm on $\R^n$ and 
\begin{equation*}
	\int_{\mathbb{T}^n \times \mathbb{R}^n} \big(v\cdot D \varphi(x) - \varepsilon \Delta \varphi(x)\big)\;d\mu(x,v) = 0 \qquad\text{for all}\;\varphi\in C^2(\mathbb{T}^n,\R).
\end{equation*}
We denote by $\mathcal{C}(\varepsilon)$ the set of all holonomic measures associated with $\varepsilon$ in this sense. 
\end{defn}

Suppose $\overline H^\eps(0)$ is the ergodic constant of  \eqref{eq:Intro:cell-ergodic-2nd-order} associated with $p=0$, then

\begin{equation*}
	-\overline{H}^\varepsilon(0) =\inf_{\mu\in\cC(\eps)}	\int_{\mathbb{T}^n \times \mathbb{R}^n} L\;d\mu.
\end{equation*}

Any measure $\mu\in\cC(\eps)$ that attains the infimum is called a \emph{stochastic Mather measure}. The collection of all stochastic Mather measures associated with \eqref{eq:Intro:cell-ergodic-2nd-order} for $p=0$ is denoted by $\mathcal{M}_0(\varepsilon)$ (or associated with $H$). For a general $p\in \mathbb{R}^n$, we use $\mathcal{M}_p(\varepsilon)$ to denote such a set.

\begin{prop}[Sec. 6 of \cite{gomes_stochastic_2002}]
\label{prop:background:unique-measures} 
Assume \ref{itm:assumptions-p1} and \ref{itm:assumptions-p2}. Let $\eps>0$ and $u^\varepsilon$ be a solution of \eqref{eq:Intro:cell-ergodic-2nd-order} with $p=0$. 
\begin{itemize}
	\item[$\mathrm{(i)}$] Any $\mu\in\mathcal{M}_0(\eps)$ is supported in the graph $\{(x,D_pH(x,Du^\varepsilon(x)))\in \T^n\times\R^n~|~x\in\T^n\}$;
	\item[$\mathrm{(ii)}$] 
        By projecting \(\mu(x,v) \in \mathcal{M}_0(\varepsilon)\) onto the \(x\)-coordinates, we obtain a unique invariant density \(\theta^\varepsilon \in W^{1,2}(\mathbb{T}^n,\mathbb{R})\), which satisfies
	\begin{equation*}
            -\mathrm{div} \left(\theta^\varepsilon(x) D_pH\big(x,Du^\varepsilon(x)\big)\right) 
            - \varepsilon \Delta \theta^\varepsilon(x) = 0 \qquad\text{in}\;\mathbb{T}^n. 
	\end{equation*}
\end{itemize}
\end{prop}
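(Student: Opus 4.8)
The plan is to prove (i) by an equality-case analysis in the Fenchel--Young inequality, using that a stochastic Mather measure saturates the variational lower bound for $-\overline{H}^\varepsilon(0)$, and then to extract (ii) by pushing the measure down to the base torus and identifying the resulting constraint as the weak Fokker--Planck equation for the invariant density of a nondegenerate elliptic operator.

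For (i), first I would note that under \ref{itm:assumptions-p1} and \ref{itm:assumptions-p2} the solution $u^\varepsilon$ of \eqref{eq:Intro:cell-ergodic-2nd-order} is not merely a viscosity solution but lies in $C^{2}(\T^n)$ (by the Bernstein gradient bound together with Schauder bootstrapping), so that $u^\varepsilon$ is an admissible test function in Definition \ref{defn:HolonomicMeasures}. Choosing $\varphi = u^\varepsilon$ in the holonomy constraint gives $\int (v\cdot Du^\varepsilon - \varepsilon \Delta u^\varepsilon)\,d\mu = 0$ for every $\mu \in \cC(\varepsilon)$, whence
\[
\int L\,d\mu = \int \big(L(x,v) - v\cdot Du^\varepsilon(x) + \varepsilon\Delta u^\varepsilon(x)\big)\,d\mu.
\]
By the Legendre transform, $L(x,v) - v\cdot Du^\varepsilon(x) \geq -H(x,Du^\varepsilon(x))$ pointwise, with equality iff $v = D_pH(x,Du^\varepsilon(x))$; substituting the cell equation $-H(x,Du^\varepsilon) + \varepsilon\Delta u^\varepsilon = -\overline{H}^\varepsilon(0)$ shows the integrand is $\geq -\overline{H}^\varepsilon(0)$, so $\int L\,d\mu \geq -\overline{H}^\varepsilon(0)$. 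For $\mu \in \cM_0(\varepsilon)$ the minimization identity forces equality, hence the nonnegative integrand must vanish $\mu$-a.e., i.e. equality in Fenchel--Young holds $\mu$-a.e., which is exactly the assertion that $\mu$ is carried by the graph $v = D_pH(x,Du^\varepsilon(x))$.

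For (ii), part (i) shows $\mu$ is the pushforward of its $x$-marginal $\sigma$ under $x\mapsto (x, D_pH(x,Du^\varepsilon(x)))$, so the holonomy constraint reduces to
\[
\int_{\T^n}\big(D_pH(x,Du^\varepsilon)\cdot D\varphi - \varepsilon\Delta\varphi\big)\,d\sigma = 0 \qquad \text{for all } \varphi\in C^2(\T^n),
\]
i.e. $\sigma$ is invariant for the generator $\cL\varphi := \varepsilon\Delta\varphi - D_pH(x,Du^\varepsilon)\cdot D\varphi$. Since $\varepsilon > 0$ this operator is uniformly elliptic with (at least Hölder-)continuous drift on the compact torus, and the classical theory of invariant measures yields a unique invariant probability measure whose density $\theta^\varepsilon$ is positive and lies in $W^{1,2}(\T^n)$ by elliptic regularity. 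Integrating the displayed constraint by parts on $\T^n$ then transfers all derivatives onto $\theta^\varepsilon$ and produces, weakly, $-\mathrm{div}(\theta^\varepsilon D_pH(x,Du^\varepsilon)) - \varepsilon\Delta\theta^\varepsilon = 0$.

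The hard part will be the two regularity and ergodicity inputs rather than the algebra. In (i), the step that converts the scalar inequality into a pointwise $\mu$-a.e. identity is only legitimate once $u^\varepsilon$ is genuinely $C^2$, so the preliminary bootstrap --- which is exactly where \ref{itm:assumptions-p2} enters through the Bernstein gradient estimate --- carries real weight. In (ii), the substantive analytic content is the existence, uniqueness, and $W^{1,2}$ regularity of the invariant density for $\cL$: existence via Krylov--Bogolyubov, uniqueness via the strong maximum principle (the kernel of $\cL$ on a connected compact manifold is one-dimensional, hence so is that of its adjoint), and Sobolev regularity via De Giorgi--Nash--Moser. This last point is also what ultimately forces the stochastic Mather measure itself to be unique for $\varepsilon>0$.
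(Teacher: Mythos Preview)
The paper does not supply its own proof of this proposition: it is stated as background and attributed to Section~6 of \cite{gomes_stochastic_2002}. Your sketch is correct and follows the same line as that reference---Fenchel--Young equality forced by optimality for~(i), then the $x$-marginal identified as the invariant measure of the uniformly elliptic generator $\varepsilon\Delta - D_pH(x,Du^\varepsilon)\cdot D$ for~(ii), with uniqueness and Sobolev regularity coming from standard elliptic theory on the torus. There is nothing to compare against in the present paper beyond noting that your argument matches the cited source.
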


Note that the definition of stochastic Mather measures is compatible with the classical Mather measures (\(\varepsilon = 0\)). However, \(\mathcal{M}_0(0)\) is usually not a singleton (see \cite{zhang_zhou_2014_acta_sin}), while \(\mathcal{M}_0(\varepsilon)\) is a singleton for \(\varepsilon > 0\).

\begin{cor}\label{cor:SingletonM0} Assume \ref{itm:assumptions-p1}, \ref{itm:assumptions-p2} and $\varepsilon>0$. The set of Mather measures $\mathcal{M}_0(\varepsilon)$ is a singleton.
\end{cor}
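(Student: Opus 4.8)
The plan is to deduce the corollary directly from the two parts of Proposition~\ref{prop:background:unique-measures}: part~(i) confines every $\mu \in \cM_0(\eps)$ to a single graph over $\T^n$, while part~(ii) pins down the projection of $\mu$ onto the base. Since a probability measure carried by the graph of a fixed function is completely determined by its base marginal, uniqueness of both ingredients forces uniqueness of $\mu$. Existence of at least one stochastic Mather measure is already supplied by the theory of \cite{gomes_stochastic_2002}, so it remains only to establish uniqueness.

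First I would record that the graph itself does not depend on any choices. For $\eps > 0$ the viscous cell problem \eqref{eq:Intro:cell-ergodic-2nd-order} admits a solution $u^\eps$ that is unique up to an additive constant \cite{lasry_nonlinear_1989}, and by elliptic regularity $u^\eps \in C^2(\T^n)$. Consequently the gradient $Du^\eps$, and hence the continuous map
\begin{equation*}
    \Phi(x) := D_pH\big(x, Du^\eps(x)\big), \qquad x \in \T^n,
\end{equation*}
is well defined independently of the chosen solution. By Proposition~\ref{prop:background:unique-measures}(i), every $\mu \in \cM_0(\eps)$ is then supported in the compact graph $\Gamma = \{(x,\Phi(x)) : x \in \T^n\}$.

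Next I would invoke the standard fact that a measure carried by a graph is the pushforward of its base marginal. Let $\pi : \T^n \times \R^n \to \T^n$ denote the projection $(x,v) \mapsto x$. Since $v = \Phi(x)$ holds $\mu$-almost everywhere, for every $g \in C_c(\T^n \times \R^n)$ one has
\begin{equation*}
    \int_{\T^n \times \R^n} g(x,v)\, d\mu(x,v) = \int_{\T^n \times \R^n} g\big(x,\Phi(x)\big)\, d\mu(x,v) = \int_{\T^n} g\big(x,\Phi(x)\big)\, d(\pi_\sharp \mu)(x),
\end{equation*}
so that $\mu = (\mathrm{id}, \Phi)_\sharp (\pi_\sharp \mu)$; thus $\mu$ is determined by its base marginal $\pi_\sharp \mu$ together with the fixed map $\Phi$. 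By Proposition~\ref{prop:background:unique-measures}(ii), the marginal $\pi_\sharp \mu = \theta^\eps\, dx$ is the \emph{unique} invariant density solving the linear adjoint equation, hence is the same for all $\mu \in \cM_0(\eps)$. Combining the two facts shows that any two Mather measures coincide, i.e. $\cM_0(\eps)$ is a singleton.

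The substantive content of the argument is entirely carried by Proposition~\ref{prop:background:unique-measures}; the only point demanding care is the uniqueness of the base marginal, which rests on the fact that the nondegenerate ($\eps > 0$) linear adjoint operator has a one-dimensional kernel spanned by a positive density, a consequence of the strong maximum principle for uniformly elliptic operators. Once that uniqueness is in hand (as encoded in part~(ii)), the reduction of $\mu$ to a pushforward over a fixed graph is purely formal, so I expect no further obstacle beyond correctly assembling the cited structural results.
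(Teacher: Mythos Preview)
Your proposal is correct and follows essentially the same approach as the paper: both arguments use Proposition~\ref{prop:background:unique-measures}(i) to confine $\mu$ to the graph of $\Phi(x)=D_pH(x,Du^\eps(x))$ and Proposition~\ref{prop:background:unique-measures}(ii) to fix the base marginal $\theta^\eps\,dx$, then observe that a measure supported on a graph is determined by its projection. The only cosmetic difference is that the paper phrases the last step via disintegration (slicing measures) whereas you use the pushforward identity $\mu=(\mathrm{id},\Phi)_\sharp(\pi_\sharp\mu)$; the content is identical.
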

\begin{proof} Take $\mu\in\mathcal{M}_0(\varepsilon)$ and $\phi\in C^0(\T^n\times \R^n)$. By Proposition \ref{prop:background:unique-measures} (i) we have 
\begin{align}\label{eq:SingletonA}
    \int_{\T^n\times\R^n} \phi(x,v)\;d\mu(x,v)  = \int_{\T^n\times \R^n} \phi(x,D_pH(x,Du^\varepsilon(x)))\;d \mu(x,v). 
\end{align}
Let $\theta^\varepsilon$ be unique the projection onto $x$-coordinates of $\mu$ by Proposition \ref{prop:background:unique-measures} (ii). Denote $d\sigma(x)= \theta^\varepsilon(x)dx$ as a measure on $\T^n$, then by slicing measure \cite[Theorem 1.45]{evans_measure_2015}, for $\sigma$-a.e. $x\in \T^n$ there exists a Radon measure $\nu_x$ on $\R^n$ such that $\nu_x(\R^n) = 1$ for $\sigma$-a.e. $x\in \T^n$, and 
\begin{align}
    &\int_{\T^n\times \R^n} \phi(x,D_pH(x,Du^\varepsilon(x)))\;d \mu(x,v) 
    = \int_{\T^n} \left(\int_{\R^n} \phi(x,D_pH(x,Du^\varepsilon(x)))\;d\nu_x(v) \right)\;d\sigma(x) 
    \nonumber \\
    & \qquad\qquad\qquad \qquad \qquad  = \int_{\T^n} \phi(x,D_pH(x,Du^\varepsilon(x))) \;d\sigma(x) = \int_{\T^n} \phi(x,D_pH(x,Du^\varepsilon(x)))\theta^\varepsilon(x)\;dx. 
    \label{eq:SingletonB}
\end{align}
From \eqref{eq:SingletonA} and \eqref{eq:SingletonB}, we see that the action of \(\mu \in \mathcal{M}_0(\varepsilon)\) against any \(\phi \in C^0(\mathbb{T}^n \times \mathbb{R}^n)\) is determined by \(\phi\) and \(\theta^\varepsilon\). Since \(\theta^\varepsilon\) is unique among all measures in \(\mathcal{M}_0(\varepsilon)\) by Proposition \ref{prop:background:unique-measures} (ii), we conclude that \(\mathcal{M}_0(\varepsilon)\) is a singleton. Recall that \(u^\varepsilon\) is unique up to a constant, so \(Du^\varepsilon(x)\) is uniquely determined.
\end{proof}

\begin{lem}[Scaling measures]\label{lem:scale-v-stochastic} Assume \ref{itm:assumptions-p1}, \ref{itm:assumptions-p2} and $\varepsilon>0$. For any measure \(\mu \in \mathcal{P}(\mathbb{T}^n \times \mathbb{R}^n)\) and \(\lambda > 0\), we define the rescaled probability measure \(\mu^\lambda\) by:
\begin{equation*}
	\int_{\mathbb{T}^n \times \mathbb{R}^n} \phi(x,v)\;d\mu^\lambda(x,v) = \int_{\mathbb{T}^n \times \mathbb{R}^n} \phi\big(x,\lambda v\big)\;d\mu(x,v), \qquad \text{for any}\;\phi\in C^0(\mathbb{T}^n \times \mathbb{R}^n).
\end{equation*}
If $\mu \in \mathcal{C}\left(\varepsilon\right) $ then $\mu^\lambda \in \mathcal{C}(\lambda\varepsilon)$. 
\end{lem}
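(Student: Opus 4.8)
The plan is to recognize $\mu^\lambda$ as a genuine pushforward measure and then verify the three defining properties of $\mathcal{C}(\lambda\varepsilon)$ directly from the corresponding properties of $\mu\in\mathcal{C}(\varepsilon)$. Concretely, the defining relation for $\mu^\lambda$ is exactly the statement that $\mu^\lambda = T_\#\mu$ is the pushforward of $\mu$ under the fibrewise dilation $T(x,v)=(x,\lambda v)$. Once this identification is in place, the change-of-variables identity $\int g\,d\mu^\lambda = \int (g\circ T)\,d\mu$ becomes available not only for bounded continuous $g$ but for every $\mu^\lambda$-integrable Borel $g$, which is what we will need since the relevant test integrand grows linearly in $v$.

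First I would check that $\mu^\lambda$ is a probability measure with finite first velocity moment, i.e.\ the two nonvariational requirements in Definition \ref{defn:HolonomicMeasures}. Taking $\phi\equiv 1$ in the defining relation gives $\mu^\lambda(\mathbb{T}^n\times\mathbb{R}^n)=1$, and since $\int |v|\,d\mu^\lambda = \int |\lambda v|\,d\mu = \lambda\int|v|\,d\mu<\infty$, the finiteness of the first moment transfers from $\mu$ to $\mu^\lambda$.

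The core step is the holonomy identity at the scaled viscosity level $\lambda\varepsilon$. Fix $\varphi\in C^2(\mathbb{T}^n,\mathbb{R})$ and set $\psi(x,v):=v\cdot D\varphi(x)-\lambda\varepsilon\,\Delta\varphi(x)$. Evaluating $\psi$ along $T$ gives $\psi(x,\lambda v)=\lambda v\cdot D\varphi(x)-\lambda\varepsilon\,\Delta\varphi(x)=\lambda\big(v\cdot D\varphi(x)-\varepsilon\,\Delta\varphi(x)\big)$: the dilation of the velocity produces a factor $\lambda$ in the drift term, and matching it by scaling the viscosity from $\varepsilon$ to $\lambda\varepsilon$ produces the same factor in the diffusion term, so the whole integrand factors as $\lambda$ times the original holonomy integrand for $\mu$. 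Applying the change-of-variables identity and then the hypothesis $\mu\in\mathcal{C}(\varepsilon)$ yields
\begin{equation*}
	\int_{\mathbb{T}^n\times\mathbb{R}^n}\big(v\cdot D\varphi-\lambda\varepsilon\,\Delta\varphi\big)\,d\mu^\lambda = \lambda\int_{\mathbb{T}^n\times\mathbb{R}^n}\big(v\cdot D\varphi-\varepsilon\,\Delta\varphi\big)\,d\mu = 0,
\end{equation*}
which is precisely the defining constraint for $\mu^\lambda\in\mathcal{C}(\lambda\varepsilon)$.

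The only point requiring care — and the main, though mild, obstacle — is the justification of the change-of-variables identity for the integrand $\psi$, which is unbounded because of the linear term $v\cdot D\varphi(x)$, whereas the stated defining relation for $\mu^\lambda$ is phrased for $\phi\in C^0(\mathbb{T}^n\times\mathbb{R}^n)$. I would handle this by truncation: apply the identity to $\psi$ multiplied by a cutoff $\chi_R(v)$ supported in $\{|v|\le R\}$ and let $R\to\infty$, controlling the tail by the finite first moments of $\mu$ and $\mu^\lambda$ established above together with dominated convergence. Equivalently, once $\mu^\lambda$ is viewed as $T_\#\mu$, the identity holds for all $\mu^\lambda$-integrable Borel functions by the standard pushforward formula, and integrability of $\psi$ against $\mu^\lambda$ is guaranteed by that same first-moment bound.
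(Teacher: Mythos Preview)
Your proof is correct and follows essentially the same route as the paper's: both compute $\int (v\cdot D\varphi - \lambda\varepsilon\,\Delta\varphi)\,d\mu^\lambda = \lambda\int (v\cdot D\varphi - \varepsilon\,\Delta\varphi)\,d\mu = 0$ directly from the defining relation. The only difference is that you are more careful than the paper in justifying the change of variables for the unbounded integrand via the first-moment bound, a point the paper passes over silently.
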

\begin{proof} Let $\varphi\in C^2(\mathbb{T}^n)$, we compute
\begin{equation*}
	\int_{\mathbb{T}^n\times \mathbb{R}^n} \Big(v\cdot D \varphi(x) - (\lambda\varepsilon) \Delta \varphi(x)\Big)\;d\mu^\lambda(x,v) 
	= 
	\lambda\int_{\mathbb{T}^n\times \mathbb{R}^n} \Big( v\cdot D \varphi(x) - \varepsilon\Delta \varphi(x)\Big)\;d\mu(x,v) = 0 
\end{equation*}
since $\mu\in \mathcal{C}(\varepsilon)$.
\end{proof}

\begin{lem}[Bernstein's method, Sec. 1 of \cite{tran_hamilton-jacobi_2021}]\label{lemma:bernstein} Assume \(H(x,\xi) \in C^2(\mathbb{T}^n \times \mathbb{R}^n)\) satisfies \ref{itm:assumptions-p2}. If \(u^\varepsilon \in C^2(\mathbb{T}^n)\) is a solution to \eqref{eq:Intro:cell-ergodic-2nd-order} with \(p = 0\), then \(\|Du^\varepsilon\|_{L^\infty(\mathbb{T}^n)} \leq C\) for some constant \(C\) independent of \(\varepsilon \in (0,1)\).
\end{lem}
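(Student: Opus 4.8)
The plan is to prove the gradient bound by Bernstein's method applied to the auxiliary function $w := \frac{1}{2}|Du^\eps|^2 \in C^1(\T^n)$. Since $\T^n$ is compact and $u^\eps \in C^2$, the function $w$ attains its maximum at some $x_0 \in \T^n$, where $Dw(x_0) = 0$ and $\Delta w(x_0) \le 0$. The whole argument reduces to bounding $w(x_0) = \frac12\|Du^\eps\|_{L^\infty(\T^n)}^2$ by a constant independent of $\eps \in (0,1)$.

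First I would differentiate the equation $H(x, Du^\eps) - \eps \Delta u^\eps = \overline{H}^\eps(0)$ in each variable $x_k$, obtaining $H_{x_k} + D_\xi H \cdot D u^\eps_{x_k} - \eps \Delta u^\eps_{x_k} = 0$, where $D_\xi H$ and $H_{x_k}$ are evaluated at $(x, Du^\eps(x))$. Multiplying by $u^\eps_{x_k}$, summing over $k$, and using $\Delta w = |D^2 u^\eps|^2 + Du^\eps \cdot D(\Delta u^\eps)$ together with $u^\eps_{x_i} u^\eps_{x_i x_k} = w_{x_k}$, this yields the pointwise identity
\[
\eps \Delta w = \eps |D^2 u^\eps|^2 + D_\xi H(x, Du^\eps) \cdot Dw + D_x H(x, Du^\eps) \cdot Du^\eps .
\]
Evaluating at $x_0$, the middle term vanishes (as $Dw(x_0)=0$) and $\eps \Delta w(x_0) \le 0$, so I arrive at the key inequality
\begin{equation*}
\eps |D^2 u^\eps(x_0)|^2 + D_x H\big(x_0, Du^\eps(x_0)\big)\cdot Du^\eps(x_0) \le 0 . \tag{$\star$}
\end{equation*}

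The decisive step is to convert $(\star)$ into an $\eps$-uniform bound on $|Du^\eps(x_0)|$ via \ref{itm:assumptions-p2}. I first record that the ergodic constant is uniformly bounded: evaluating the equation at the maximum and minimum of $u^\eps$, where $Du^\eps = 0$ and $\Delta u^\eps$ has a sign, gives $\min_{\T^n} H(\cdot,0) \le \overline{H}^\eps(0) \le \max_{\T^n} H(\cdot,0)$. Next, the equation itself reads $H(x_0, Du^\eps(x_0)) = \overline{H}^\eps(0) + \eps \Delta u^\eps(x_0)$, while the trace inequality $(\Delta u^\eps)^2 \le n |D^2 u^\eps|^2$ lets me keep the viscous term of $(\star)$ as a lower bound $\eps |D^2 u^\eps(x_0)|^2 \ge \frac{1}{n\eps}\big(H(x_0,Du^\eps(x_0)) - \overline{H}^\eps(0)\big)^2$. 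Feeding these into $(\star)$ and invoking the coercivity $\frac12 H^2 + D_x H\cdot\xi \to +\infty$ of \ref{itm:assumptions-p2} forces $|Du^\eps(x_0)|$ to remain bounded: were it large, $D_x H(x_0,Du^\eps(x_0))\cdot Du^\eps(x_0) \le 0$ would push $\frac12 H(x_0, Du^\eps(x_0))^2$ to $+\infty$, which, through superlinearity of $H$ in $\xi$, cannot be reconciled with the uniform bound on $\overline{H}^\eps(0)$ and the retained control on $\eps\Delta u^\eps(x_0)$.

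I expect this last paragraph to be the main obstacle, since it is where uniformity in $\eps$ is genuinely at stake. Simply discarding the nonnegative viscous term $\eps|D^2 u^\eps|^2$ in $(\star)$ reduces it to $D_x H \cdot Du^\eps \le 0$, but then leaves $H(x_0, Du^\eps(x_0)) = \overline{H}^\eps(0) + \eps\Delta u^\eps(x_0)$ uncontrolled from above; the careful retention of $\eps|D^2u^\eps|^2$ through the trace inequality, balanced against the exact weight $\tfrac12$ in front of $H^2$ in \ref{itm:assumptions-p2}, is what closes the estimate uniformly for $\eps \in (0,1)$. Since the statement is quoted from \cite{tran_hamilton-jacobi_2021}, I would adopt the normalization and auxiliary-function choice used there to make this final balancing precise.
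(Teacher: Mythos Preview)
The paper gives no proof of this lemma; it is stated with a citation to \cite[Sec.~1]{tran_hamilton-jacobi_2021} and used as a black box thereafter. Your sketch is precisely the standard Bernstein argument from that reference, so the approaches coincide by default. One minor remark: the lemma as stated assumes only \ref{itm:assumptions-p2}, so your appeal to ``superlinearity of $H$ in $\xi$'' in the closing step imports \ref{itm:assumptions-p1}; without it, what $(\star)$ together with the trace inequality actually yields at $x_0$ is $\tfrac{1}{n\eps}(H-c)^2 + D_xH\cdot\xi \le 0$, and reconciling the prefactor $\tfrac{1}{n\eps}$ with the coefficient $\tfrac12$ in \ref{itm:assumptions-p2} uniformly for $\eps\in(0,1)$ and $n\ge 3$ requires exactly the extra care you anticipate in your last paragraph.
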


Bernstein's method in Lemma \ref{lemma:bernstein} gives an a priori estimate without requiring convexity. By Arzelà-Ascoli Theorem and Lemma \ref{lemma:bernstein}, we obtain the stability of Mather measures and the ergodic constant.

\begin{lem}[Stability]\label{lem:stability-MHeps} Assume \ref{itm:assumptions-p1}, \ref{itm:assumptions-p2}. We have:
\begin{itemize}
	\item[$\mathrm{(i)}$] Let $\varepsilon_k\to \varepsilon$ in $[0,1)$ and $\mu_{\varepsilon_k}\in \mathcal{C}(\varepsilon_k)$. If $\mu_{\varepsilon_k}\rightharpoonup \mu$, then $\mu\in \mathcal{C}(\varepsilon)$.
	\item[$\mathrm{(ii)}$] Let $\varepsilon_k\to \varepsilon$ in $[0,1)$ and $\mu_{\varepsilon_k}\in \mathcal{M}_0(\varepsilon_k)$. If $\mu_{\varepsilon_k}\rightharpoonup \mu$, then $\mu\in \mathcal{M}_0(\varepsilon)$.
	\item[$\mathrm{(iii)}$]  The map $\varepsilon\mapsto\overline{H}^\varepsilon(0)$ is continuous in $[0,1)$.
\end{itemize}
\end{lem}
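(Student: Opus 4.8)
The plan rests on two structural inputs. First, the drift $v$ enters the holonomic constraint and the action $\int L\,d\mu$ linearly, so the recurring technical point is to pass limits through integrands of at most linear growth in $v$, which is not controlled by weak-$*$ convergence against bounded continuous functions alone. Second, for Mather measures this difficulty evaporates: by Proposition~\ref{prop:background:unique-measures} every $\mu_{\eps_k}\in\cM_0(\eps_k)$ is supported on the graph $\{(x,D_pH(x,Du^{\eps_k}(x)))\}$, and by Lemma~\ref{lemma:bernstein} the bound $\|Du^{\eps_k}\|_{L^\infty}\le C$ is uniform in $k$, so these measures share a fixed compact support $\T^n\times\overline{B_R}$. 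Uniform compact support yields tightness and turns every weak-$*$ limit passage — including those involving $v\cdot D\varphi$ and $L$ — into an elementary statement, which is exactly what (ii) and (iii) will use.

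For (i), fix $\varphi\in C^2(\T^n)$. If $\rightharpoonup$ is read as convergence of integrals against continuous functions of at most linear growth in $v$ (the natural topology for measures of finite first moment), the claim is immediate, since $v\cdot D\varphi(x)-\eps_k\Delta\varphi(x)$ has linear growth and $\eps_k\to\eps$ is absorbed by the bounded curvature term. To argue with only weak-$*$ convergence I would split off the curvature part $\eps_k\int\Delta\varphi\,d\mu_{\eps_k}\to\eps\int\Delta\varphi\,d\mu$ (bounded continuous integrand) and treat the drift $\int v\cdot D\varphi\,d\mu_{\eps_k}$ by inserting a cutoff $\chi_R(v)\in[0,1]$, equal to $1$ on $\{|v|\le R\}$ and supported in $\{|v|\le 2R\}$: the truncated integrand $(v\cdot D\varphi)\chi_R$ is bounded continuous and passes to the limit, while the tail $\int_{\{|v|>R\}}|v\cdot D\varphi|\,d\mu_{\eps_k}$ is made uniformly small by uniform integrability of the first moment. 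Letting $R\to\infty$ gives $\int(v\cdot D\varphi-\eps\Delta\varphi)\,d\mu=0$, i.e. $\mu\in\cC(\eps)$.

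For (iii), which also covers the endpoint $\eps=0$, I would argue at the level of solutions. Normalizing $u^{\eps_k}(x_0)=0$, Lemma~\ref{lemma:bernstein} and Arzel\`a--Ascoli produce a subsequence with $u^{\eps_k}\to u$ uniformly, while evaluating \eqref{eq:Intro:cell-ergodic-2nd-order} at the extrema of $u^{\eps_k}$ (where $Du^{\eps_k}=0$ and $\Delta u^{\eps_k}$ has a sign) pins $\overline{H}^{\eps_k}(0)$ between $\min_x H(x,0)$ and $\max_x H(x,0)$, so $\overline{H}^{\eps_k}(0)\to c$ along a further subsequence. Stability of viscosity solutions then shows $u$ solves $H(x,Du)-\eps\Delta u=c$, and uniqueness of the ergodic constant forces $c=\overline{H}^{\eps}(0)$; since every subsequence has a further subsequence with this same limit, the whole sequence converges. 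For $\eps>0$ one may alternatively obtain the lower bound from Lemma~\ref{lem:scale-v-stochastic}, rescaling a measure of $\cM_0(\eps)$ by $\lambda=\eps_k/\eps$ to produce competitors in $\cC(\eps_k)$ whose $L$-integrals converge to $-\overline{H}^\eps(0)$. It is convenient to establish (iii) before (ii).

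Finally, for (ii), part (i) already gives $\mu\in\cC(\eps)$, so it remains to show $\mu$ minimizes the action. Since $L$ is continuous and the $\mu_{\eps_k}$ share a fixed compact support in $v$, one has $\int L\,d\mu_{\eps_k}\to\int L\,d\mu$; but $\int L\,d\mu_{\eps_k}=-\overline{H}^{\eps_k}(0)$ as $\mu_{\eps_k}\in\cM_0(\eps_k)$, and by (iii) this converges to $-\overline{H}^\eps(0)$. Hence $\int L\,d\mu=-\overline{H}^\eps(0)=\inf_{\nu\in\cC(\eps)}\int L\,d\nu$, so $\mu\in\cM_0(\eps)$. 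The main obstacle throughout is the linear growth of the drift in the holonomic constraint of (i); once the uniform compact support (hence uniform integrability) supplied by Bernstein's estimate and the graph representation is in place, every limit passage becomes routine, and the only genuinely separate ingredient is the uniqueness of the ergodic constant used to identify the limit in (iii).
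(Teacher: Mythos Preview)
Your proposal is correct and matches the paper's approach: the paper gives no proof of this lemma beyond the sentence ``By Arzel\`a--Ascoli Theorem and Lemma~\ref{lemma:bernstein}, we obtain the stability of Mather measures and the ergodic constant,'' and you reconstruct precisely these ingredients---the uniform gradient bound and graph support for compactness in (ii), and Arzel\`a--Ascoli with viscosity-solution stability for (iii). You also correctly flag the linear-growth subtlety in (i), which the paper leaves implicit and which is harmless in practice since (i) is only invoked in the paper via (ii), where the measures have uniform compact support.
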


\section{Proof of Theorem \ref{thm:regularity-eps}: Derivatives with respect to viscosity}\label{section:regularity-eps}
Throughout this section, we consider $p = 0$ in \eqref{eq:Intro:cell-ergodic-2nd-order}. Under \ref{itm:assumptions-p1} and \ref{itm:assumptions-p2}, there is a unique solution \(u^\varepsilon\) to \eqref{eq:Intro:cell-ergodic-2nd-order} with \(u^\varepsilon(0)=0\), and \(\{u^\varepsilon\}_{\varepsilon>0}\) is uniformly Lipschitz by Lemma \ref{lemma:bernstein}. We split the proof into several steps for clarity.
\begin{lem}\label{lem:one-sided} Assume \ref{itm:assumptions-p1} and \ref{itm:assumptions-p2}. The map $\varepsilon\mapsto c(\varepsilon)$ is one-sided differentiable everywhere for $\varepsilon\in (0,1)$, with 
\begin{align}
	   \varepsilon c'_-(\varepsilon) &=  \lim_{\lambda\to 1^-} \left(\frac{c(\lambda \varepsilon) - c(\varepsilon)}{\lambda-1}\right) = \min_{\mu\in \mathcal{M}_0(\varepsilon)} \int_{\mathbb{T}^n \times \mathbb{R}^n} (-v)\cdot D_vL (x,v)\,d\mu, \label{eq:lemma:one-sided-left}\\ 
	   \varepsilon c'_+(\varepsilon) &=  \lim_{\lambda\to 1^+} \left(\frac{c(\lambda \varepsilon) - c(\varepsilon)}{\lambda-1}\right) = \max_{\mu\in \mathcal{M}_0(\varepsilon)} \int_{\mathbb{T}^n \times \mathbb{R}^n} (-v)\cdot D_vL(x,v)\,d\mu.\label{eq:lemma:one-sided-right}
\end{align}
\end{lem}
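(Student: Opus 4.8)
The plan is to exploit the scaling structure of holonomic measures (Lemma \ref{lem:scale-v-stochastic}) together with the duality formula $-c(\varepsilon)=\inf_{\mu\in\mathcal{C}(\varepsilon)}\int_{\T^n\times\R^n}L\,d\mu$, in which the infimum is attained exactly on $\mathcal{M}_0(\varepsilon)$. The guiding observation is that rescaling the velocity by $\lambda$ turns a competitor at viscosity $\varepsilon$ into one at viscosity $\lambda\varepsilon$, so I can compare $c(\lambda\varepsilon)$ with $c(\varepsilon)$ by testing each minimization problem against a rescaled optimizer of the other and then differentiating in $\lambda$ at $\lambda=1$.

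Concretely, I would first establish a two-sided sandwich for the increment $c(\lambda\varepsilon)-c(\varepsilon)$. For the lower bound, take any $\mu\in\mathcal{M}_0(\varepsilon)$; by Lemma \ref{lem:scale-v-stochastic} its rescaling $\mu^\lambda$ lies in $\mathcal{C}(\lambda\varepsilon)$ and so is admissible for the problem defining $c(\lambda\varepsilon)$. Using $-c(\varepsilon)=\int L\,d\mu$ together with $-c(\lambda\varepsilon)\le\int L\,d\mu^\lambda=\int L(x,\lambda v)\,d\mu$, I obtain
\[
c(\lambda\varepsilon)-c(\varepsilon)\ \ge\ \int_{\T^n\times\R^n}\big(L(x,v)-L(x,\lambda v)\big)\,d\mu .
\]
For the upper bound, take a Mather measure $\mu_{\lambda\varepsilon}\in\mathcal{M}_0(\lambda\varepsilon)$; rescaling by $1/\lambda$ gives $(\mu_{\lambda\varepsilon})^{1/\lambda}\in\mathcal{C}(\varepsilon)$, an admissible competitor for $c(\varepsilon)$, whence
\[
c(\lambda\varepsilon)-c(\varepsilon)\ \le\ \int_{\T^n\times\R^n}\big(L(x,v/\lambda)-L(x,v)\big)\,d\mu_{\lambda\varepsilon}.
\]

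Dividing by $\lambda-1$ and sending $\lambda\to1^{\pm}$, I would pass to the limit in both bounds. The difference quotients $\tfrac{L(x,\lambda v)-L(x,v)}{\lambda-1}$ and $\tfrac{L(x,v/\lambda)-L(x,v)}{\lambda-1}$ both converge to $v\cdot D_vL(x,v)$, so the relevant integrand is $(-v)\cdot D_vL(x,v)$. Since the gradients $\{Du^{\lambda\varepsilon}\}$ are uniformly bounded by Bernstein's estimate (Lemma \ref{lemma:bernstein}) and each Mather measure is carried by the graph $v=D_pH(x,Du^{\lambda\varepsilon}(x))$ by Proposition \ref{prop:background:unique-measures}(i), all measures in play are supported in a fixed compact set $\T^n\times\overline{B_R}$; there the $C^2$-regularity of $L$ makes the convergence of the difference quotients uniform, so each quotient may be replaced by $v\cdot D_vL$ up to an $o(1)$ error uniform in the measure. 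In the lower bound the measure $\mu$ is fixed, so the limit is immediate, and taking the supremum (resp. infimum) over $\mu\in\mathcal{M}_0(\varepsilon)$ yields $\liminf_{\lambda\to1^+}\ge\max$ and $\limsup_{\lambda\to1^-}\le\min$ of $\int(-v)\cdot D_vL\,d\mu$, matching the right-hand sides of \eqref{eq:lemma:one-sided-right} and \eqref{eq:lemma:one-sided-left}.

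The main obstacle is the upper bound, where the competing measure $\mu_{\lambda\varepsilon}$ itself varies with $\lambda$. Here I would use tightness (from the uniform velocity bound) to extract, along any sequence $\lambda\to1$ realizing the relevant $\limsup$/$\liminf$, a weakly convergent subsequence $\mu_{\lambda\varepsilon}\rightharpoonup\mu_*$, and invoke the stability Lemma \ref{lem:stability-MHeps}(ii) to ensure $\mu_*\in\mathcal{M}_0(\varepsilon)$. Combining the uniform convergence of the difference quotient with this weak convergence (both supported in the common compact set) lets the upper bound pass to the limit as $\int(-v)\cdot D_vL\,d\mu_*$, which is $\le\max$ (resp. $\ge\min$). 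The two bounds then pinch the one-sided limits onto the extremal values in \eqref{eq:lemma:one-sided-left}--\eqref{eq:lemma:one-sided-right}, so $c'_\pm(\varepsilon)$ exist and equal the stated max and min. Note that the singleton property of $\mathcal{M}_0(\varepsilon)$ from Corollary \ref{cor:SingletonM0} is not needed for this lemma; it only later collapses $c'_+=c'_-$ in Theorem \ref{thm:regularity-eps}(i).
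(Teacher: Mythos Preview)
Your proposal is correct and follows essentially the same approach as the paper's proof: both use the scaling Lemma \ref{lem:scale-v-stochastic} to produce a two-sided sandwich for $c(\lambda\varepsilon)-c(\varepsilon)$ by testing each minimization against a rescaled optimizer of the other, then pass to the limit using the uniform support bound from Lemma \ref{lemma:bernstein} and Proposition \ref{prop:background:unique-measures}(i), weak compactness, and the stability Lemma \ref{lem:stability-MHeps}(ii). Your remark that Corollary \ref{cor:SingletonM0} is not needed here (only later to identify $c'_+=c'_-$) is also consistent with how the paper organizes the argument.
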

\begin{proof} 
\noindent Take $\lambda >0$, if $\mu_\lambda \in \mathcal{M}_0(\lambda \varepsilon)$ then the scaling measure $\mu_\lambda^{1/\lambda} \in \mathcal{C}(\varepsilon)$ by Lemma \ref{lem:scale-v-stochastic}, therefore
\begin{equation*}
\begin{aligned}
    &\int_{\mathbb{T}^n \times \mathbb{R}^n} L\big(x,\lambda^{-1}v\big) \,d\mu_\lambda(x,v)=\int_{\mathbb{T}^n \times \mathbb{R}^n} L(x,v)d\mu_\lambda^{1/\lambda}(x,v) \geq -c(\varepsilon),\\
    &\int_{\mathbb{T}^n \times \mathbb{R}^n} L\big(x,v\big) \,d\mu_\lambda(x,v) =  -c(\lambda\varepsilon).
\end{aligned}
\end{equation*}	
We deduce that
\begin{equation*}
	\int_{\mathbb{T}^n \times \mathbb{R}^n} \left(L(x, \lambda^{-1}v) - L(x,v)\right)\,d\mu_\lambda \geq c(\lambda\varepsilon)-c(\varepsilon), \qquad \mu_\lambda\in\mathcal{M}_0(\lambda \varepsilon).
\end{equation*}
If $\lambda < 1$, dividing both sides by $\lambda-1 < 0$ we deduce that 
\begin{align}\label{eq:Lambda1Left}
     \int_{\mathbb{T}^n \times \mathbb{R}^n}\left(\frac{L\left(x, \lambda^{-1}v \right) -L(x,v) }{\lambda-1}\right) \,d\mu_\lambda \leq \frac{c(\lambda\varepsilon) - c(\varepsilon)}{\lambda-1}, \qquad \mu_\lambda\in \mathcal{M}_0(\lambda \varepsilon).
\end{align}
By Lemma \ref{lemma:bernstein}, there exists a constant \(C_0\) such that \(\Vert Du^{\lambda \varepsilon}\Vert_{L^\infty(\T^n)} \leq C_0\) for all \(\lambda \in (0,1)\). Consequently, by Proposition \ref{prop:background:unique-measures} (i), \(\mathrm{supp}(\mu_\lambda) \subset \T^n \times \overline{B}_{R_0}(0)\), where \(R_0 = \max \{D_pH(x, \xi) : x \in \T^n, |\xi| \leq C_0\}\). 
Thus, the sequence of measures \(\{\mu_\lambda\}\) on a compact subset $\T^n\times \overline{B}_{R_0}(0)$ of \(\T^n \times \mathbb{R}^n\) has a weak\(^*\) convergent subsequence $\mu_{\lambda}\rightharpoonup \mu^-$ as $\lambda\to 1^-$ (we use the same notation for the subsequence for simplicity). By Lemma \ref{lem:stability-MHeps} we have $\mu^-\in \mathcal{M}_0(\varepsilon)$, and from \eqref{eq:Lambda1Left} we have
\begin{equation}\label{eq:liminf-lambda-1-stochastic}
    \displaystyle
    -\int_{\mathbb{T}^n \times \mathbb{R}^n} v\cdot D_vL(x,v)\, d\mu^-\leq \liminf_{\lambda\to 1^-} \left(\frac{c(\lambda \varepsilon)-c(\varepsilon)}{\lambda-1}\right) \qquad\text{for some}\;\mu^-\in \mathcal{M}_0(\varepsilon).
\end{equation}
On the other hand, for any $\nu\in \mathcal{M}_0(\varepsilon)$ we have $\nu^{\lambda}\in \mathcal{C}\left(\lambda\varepsilon\right)$ by Lemma \ref{lem:scale-v-stochastic}, thus
\begin{equation}\label{eq:scaling-eps-to-lambda-eps}
\begin{aligned}
	&    \int_{\mathbb{T}^n \times \mathbb{R}^n} L\left(x,\lambda v\right) \, d\nu(x,v) = \int_{\mathbb{T}^n \times \mathbb{R}^n} L\left(x,v\right)d\nu^{\lambda}(x,v)  \geq -c\left(\lambda \varepsilon\right),\\
	&    \int_{\mathbb{T}^n \times \mathbb{R}^n} L\left(x,v\right) \, d\nu(x,v) = -c(\varepsilon).
\end{aligned}
\end{equation}
We deduce that
\begin{equation*}
	\int_{\mathbb{T}^n \times \mathbb{R}^n} \Big(L\left(x,\lambda v\right) -  L(x,v)\Big)\, d\nu \geq c(\varepsilon)- c(\lambda \varepsilon), \qquad\text{for all}\;\nu \in \mathcal{M}_0(\varepsilon).
\end{equation*}
If $\lambda < 1$, dividing both sides by $1-\lambda > 0$ we deduce that 
\begin{equation*}
	-\int_{\mathbb{T}^n \times \mathbb{R}^n} \left(\frac{L\left(x,\lambda v\right) -  L(x,v)}{\lambda-1}\right)\, d\nu \geq \frac{c(\varepsilon) - c(\lambda \varepsilon)}{1-\lambda} \qquad\text{for all}\; \nu \in \mathcal{M}_0(\varepsilon).
\end{equation*}
As $\lambda\to 1^-$ we obtain
\begin{equation}\label{eq:limsup-lambda-1-stochastic}
    \displaystyle
    -\int_{\mathbb{T}^n \times \mathbb{R}^n} v\cdot D_vL(x,v)\,d\nu \geq  \limsup_{\lambda\to 1^-}\left( \frac{c(\varepsilon) -c\left(\lambda\varepsilon\right)}{1-\lambda} \right)  \qquad\text{for all}\;\nu\in \mathcal{M}_0(\varepsilon).
\end{equation} 
From \eqref{eq:liminf-lambda-1-stochastic} and \eqref{eq:limsup-lambda-1-stochastic} we obtain the conclusion \eqref{eq:lemma:one-sided-left}. By a similar argument with $\lambda\to 1^+$ we obtain \eqref{eq:lemma:one-sided-right}.
\end{proof}

\begin{cor}\label{corollary:C1-of-the-map-formula} Assume \ref{itm:assumptions-p1}, \ref{itm:assumptions-p2}, and let $u^\varepsilon$ be any solution to \eqref{eq:Intro:cell-ergodic-2nd-order}. The map $\varepsilon\mapsto c(\varepsilon)$ is in $C^1(0,1)$ with
\begin{equation}\label{eq:corollary:C1-of-the-map-formula}
	c'(\varepsilon) = \varepsilon^{-1}\int_{\mathbb{T}^n \times \mathbb{R}^n} (-v)\cdot D_vL(x,v)\;d\mu(x,v) =  -\int_{\mathbb{T}^n \times \mathbb{R}^n} \Delta u^\varepsilon(x)\;d\mu, \qquad \text{for all}\;\mu\in \mathcal{M}_0(\varepsilon).
\end{equation}
\end{cor}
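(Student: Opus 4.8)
The plan is to deduce everything from the one-sided formulas in Lemma~\ref{lem:one-sided} together with the singleton property from Corollary~\ref{cor:SingletonM0}. First I would observe that, since $\mathcal{M}_0(\varepsilon)=\{\mu\}$ is a singleton for $\varepsilon>0$, the minimum in \eqref{eq:lemma:one-sided-left} and the maximum in \eqref{eq:lemma:one-sided-right} are taken over the same single measure. Hence $\varepsilon c'_-(\varepsilon)=\varepsilon c'_+(\varepsilon)$, so $c$ is differentiable at every $\varepsilon\in(0,1)$ with $\varepsilon c'(\varepsilon)=\int_{\T^n\times\R^n}(-v)\cdot D_vL(x,v)\,d\mu$. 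Dividing by $\varepsilon>0$ gives the first equality in \eqref{eq:corollary:C1-of-the-map-formula}.

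Next I would convert this into the expression involving $\Delta u^\varepsilon$. By Proposition~\ref{prop:background:unique-measures}(i) the measure $\mu$ is supported on the graph $\{v=D_pH(x,Du^\varepsilon(x))\}$, and by Legendre duality (with $p=0$) this is equivalent to $D_vL(x,v)=Du^\varepsilon(x)$ on $\mathrm{supp}(\mu)$. Therefore $\int(-v)\cdot D_vL\,d\mu=-\int v\cdot Du^\varepsilon\,d\mu$. Since $u^\varepsilon\in C^2(\T^n)$ is admissible as a test function in the holonomy identity of Definition~\ref{defn:HolonomicMeasures}, taking $\varphi=u^\varepsilon$ yields $\int\big(v\cdot Du^\varepsilon-\varepsilon\Delta u^\varepsilon\big)\,d\mu=0$, i.e. $\int v\cdot Du^\varepsilon\,d\mu=\varepsilon\int\Delta u^\varepsilon\,d\mu$. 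Combining gives $\varepsilon c'(\varepsilon)=-\varepsilon\int\Delta u^\varepsilon\,d\mu$, and dividing by $\varepsilon$ produces the second equality; the value is well defined since $Du^\varepsilon$ is unique and hence $\Delta u^\varepsilon$ is independent of the choice of solution.

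Finally, to upgrade differentiability to $C^1$, I would prove that $c'$ is continuous via stability and the uniform support bound. Fix $\varepsilon\in(0,1)$ and let $\varepsilon_k\to\varepsilon$. By Lemma~\ref{lemma:bernstein} the solutions $u^{\varepsilon_k}$ are uniformly Lipschitz, so by Proposition~\ref{prop:background:unique-measures}(i) the associated Mather measures $\mu_{\varepsilon_k}$ are supported in a single compact set $\T^n\times\overline{B}_{R_0}(0)$. From any subsequence I extract a weak-$*$ convergent further subsequence whose limit lies in $\mathcal{M}_0(\varepsilon)$ by Lemma~\ref{lem:stability-MHeps}(ii); since that set is the singleton $\{\mu_\varepsilon\}$, the full sequence satisfies $\mu_{\varepsilon_k}\rightharpoonup\mu_\varepsilon$. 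As $(x,v)\mapsto(-v)\cdot D_vL(x,v)$ is continuous and the supports sit in a fixed compact set, $\varepsilon_k c'(\varepsilon_k)=\int(-v)\cdot D_vL\,d\mu_{\varepsilon_k}\to\int(-v)\cdot D_vL\,d\mu_\varepsilon=\varepsilon c'(\varepsilon)$. Thus $\varepsilon\mapsto\varepsilon c'(\varepsilon)$ is continuous on $(0,1)$, and hence so is $c'$.

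I expect the second step — reconciling $(-v)\cdot D_vL$ with $-\varepsilon\Delta u^\varepsilon$ under $\mu$ — to be the crux, since it is where the two formulas in \eqref{eq:corollary:C1-of-the-map-formula} are identified, and it relies on both the graph structure of $\mathrm{supp}(\mu)$ and the admissibility of $u^\varepsilon$ as a test function in the holonomy condition (which uses the $C^2$ regularity of $u^\varepsilon$ from elliptic theory). By contrast, the differentiability and the $C^1$ upgrade are comparatively routine once the singleton and stability results are in hand.
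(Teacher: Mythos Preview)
Your proposal is correct and follows essentially the same approach as the paper: identify $\int v\cdot D_vL\,d\mu$ with $\varepsilon\int\Delta u^\varepsilon\,d\mu$ via the graph property of $\mathrm{supp}(\mu)$ and the holonomy condition with $\varphi=u^\varepsilon$, and invoke stability for continuity of $c'$. The only cosmetic difference is order: you first cite Corollary~\ref{cor:SingletonM0} to equate the one-sided derivatives and then derive the $\Delta u^\varepsilon$ identity, whereas the paper first derives the identity, observes it equals $\varepsilon\int_{\T^n}\Delta u^\varepsilon\,\theta^\varepsilon\,dx$ (hence is independent of $\mu$), and concludes $c'_-(\varepsilon)=c'_+(\varepsilon)$ from that.
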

\begin{proof} For $\mu\in \mathcal{M}_0(\varepsilon)$, if $(x,v) \in\mathrm{supp}(\mu)$ then $(x,v) = (x, D_pH(x,Du^\varepsilon(x)))$, hence by property of Legendre's transform $Du^\varepsilon(x) = D_vL(x,v)$, thus
\begin{equation*}
	\int_{\mathbb{T}^n \times \mathbb{R}^n} v\cdot D_vL(x,v)\, d\mu = 	\int_{\mathbb{T}^n \times \mathbb{R}^n} v\cdot Du^\varepsilon(x) d\mu.
\end{equation*}
Let $\theta^\varepsilon$ be the measure defined in Proposition \ref{prop:background:unique-measures}. Using property of closed measures for $\mu$, we have
\begin{equation*}
	\int_{\mathbb{T}^n \times \mathbb{R}^n} v\cdot Du^\varepsilon(x) d\mu = \int_{\mathbb{T}^n \times \mathbb{R}^n} \varepsilon\Delta u^\varepsilon(x)d\mu = \int_{\mathbb{T}^n} \varepsilon \Delta u^\varepsilon(x)\theta^\varepsilon(x)dx, \qquad\text{for all}\;\mu\in \mathcal{M}_0(\varepsilon).
\end{equation*}
This quantity is independent of $\mu\in \mathcal{M}_0(\varepsilon)$. Consequently, by Lemma \ref{lem:one-sided}, we conclude that $c'_-(\varepsilon) = c'_+(\varepsilon)$, leading to the existence of $c'(\varepsilon)$ and the validity of \eqref{eq:corollary:C1-of-the-map-formula}. Moreover, the continuity of $\varepsilon\mapsto c'(\varepsilon)$ follows from Lemma \ref{lem:stability-MHeps}.
\end{proof}

\begin{rmk}\label{remark:different-way-c'-eps} Our approach to obtain \eqref{eq:formula-c'-eps} is purely on the Lagrangian sides. Another way to view \eqref{eq:formula-c'-eps} is on the PDE side as follows.
As $\varepsilon\mapsto
c(\varepsilon)$ is in $C^1(0,1)$, we can differentiate equation \eqref{eq:Intro:cell-ergodic-2nd-order} with respect to $\varepsilon$ to obtain that
\begin{equation*}
	D_pH(x,Du^\varepsilon)\cdot Du^\varepsilon_\varepsilon - \varepsilon \Delta u^\varepsilon_\varepsilon - \Delta u^\varepsilon = c'(\varepsilon), \qquad x\in \mathbb{T}^n.
\end{equation*}
Here we denote $u^\varepsilon_\varepsilon = \partial_\varepsilon u^\varepsilon$. Using Proposition \ref{prop:background:unique-measures}, integrating this equation against $\mu\in \mathcal{M}_0(\varepsilon)$ we obtain
\begin{equation*}
	\int_{\mathbb{T}^n\times \mathbb{R}^n} \Big(D_pH(x,Du^\varepsilon)\cdot Du^\varepsilon_\varepsilon - \varepsilon \Delta u^\varepsilon_\varepsilon\Big)\;d\mu - \int_{\mathbb{T}^n\times \mathbb{R}^n} \Delta u^\varepsilon\;d\mu = - \int_{\mathbb{T}^n\times \mathbb{R}^n} \Delta u^\varepsilon\;d\mu  = c'(\varepsilon).
\end{equation*}
Here, we exploit the fact that $(x,v)\in \mathrm{supp}(\mu)$ implies $v = D_pH(x,Du^\varepsilon(x))$. Consequently, through the closed measure property, the first term on the left becomes zero, which explains \eqref{eq:formula-c'-eps}.
\end{rmk}

\begin{lem}\label{lem:semiconvex-c-eps} 
Assume \ref{itm:assumptions-p1}, \ref{itm:assumptions-p2}. The map \(\varepsilon \mapsto c(\varepsilon)\) is semiconvex with a linear modulus of order \(\mathcal{O}\left(\varepsilon^{-2}\right)\). Consequently, \(\varepsilon \mapsto c(\varepsilon)\) is twice differentiable for almost every \(\varepsilon \in (0,1)\), and wherever \(c''(\varepsilon)\) exists, we have the following result:
\begin{equation}\label{eq:lem:semiconvex-c-eps}
	c''(\varepsilon)\geq -\left(\int_{\mathbb{T}^n\times\mathbb{R}^n} v\cdot D_{vv}L(x,v)\cdot v\;d\mu(x,v)\right)\varepsilon^{-2}, \qquad \mu\in \mathcal{M}_0(\varepsilon).
\end{equation}
\end{lem}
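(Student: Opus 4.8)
The plan is to exploit the one-variable scaling inequality already isolated in the proof of Lemma~\ref{lem:one-sided}. Fix $\varepsilon\in(0,1)$ and let $\mu\in\mathcal{M}_0(\varepsilon)$, which is a singleton by Corollary~\ref{cor:SingletonM0}. Set
\[
g(\lambda):=\int_{\mathbb{T}^n\times\mathbb{R}^n} L(x,\lambda v)\,d\mu(x,v),\qquad \lambda>0.
\]
By Lemma~\ref{lem:scale-v-stochastic} the rescaled measure $\mu^\lambda$ lies in $\mathcal{C}(\lambda\varepsilon)$, so minimality of $-c(\lambda\varepsilon)$ over $\mathcal{C}(\lambda\varepsilon)$ gives $g(\lambda)=\int L\,d\mu^\lambda\ge -c(\lambda\varepsilon)$ for every $\lambda>0$, while $g(1)=\int L\,d\mu=-c(\varepsilon)$; this is exactly \eqref{eq:scaling-eps-to-lambda-eps}. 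Thus $g$ is a smooth upper barrier for $\lambda\mapsto-c(\lambda\varepsilon)$, touching it at $\lambda=1$, and the whole argument reduces to a second-order Taylor comparison of these two functions there.

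First I would record the derivatives of $g$. Since $L\in C^2$ and, by Bernstein's estimate (Lemma~\ref{lemma:bernstein}) together with Proposition~\ref{prop:background:unique-measures}(i), $\mathrm{supp}(\mu)\subset\mathbb{T}^n\times\overline{B}_{R_0}(0)$ with $R_0$ \emph{independent of} $\varepsilon\in(0,1)$, differentiation under the integral sign is justified and $g\in C^2$ with
\[
g'(\lambda)=\int v\cdot D_vL(x,\lambda v)\,d\mu,\qquad g''(\lambda)=\int v\cdot D_{vv}L(x,\lambda v)\cdot v\,d\mu.
\]
At $\lambda=1$, Corollary~\ref{corollary:C1-of-the-map-formula} identifies $g(1)=-c(\varepsilon)$ and $g'(1)=\int v\cdot D_vL(x,v)\,d\mu=-\varepsilon c'(\varepsilon)$. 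Crucially, the uniform support bound forces $0\le g''(\lambda)\le C$ for $\lambda$ in a fixed neighborhood of $1$, say $\lambda\in[1/2,3/2]$, where $C:=R_0^2\sup_{\mathbb{T}^n\times\overline{B}_{2R_0}}\|D_{vv}L\|$ is independent of $\varepsilon$.

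Next I would convert the barrier into a quantitative semiconvexity inequality. Writing $t=\lambda\varepsilon$ and using Taylor's theorem with Lagrange remainder, $g(\lambda)=g(1)+g'(1)(\lambda-1)+\tfrac12 g''(\xi)(\lambda-1)^2$ for some $\xi$ between $1$ and $\lambda$; substituting the values above together with the bound on $g''$ turns $g(\lambda)\ge-c(\lambda\varepsilon)$ into
\[
c(t)\ge c(\varepsilon)+c'(\varepsilon)(t-\varepsilon)-\frac{C}{2}\varepsilon^{-2}(t-\varepsilon)^2
\]
for all $t$ near $\varepsilon$. This is precisely semiconvexity of $\varepsilon\mapsto c(\varepsilon)$ with linear modulus $\mathcal{O}(\varepsilon^{-2})$, and standard one-dimensional theory (on each compact subinterval of $(0,1)$ the modulus is uniformly bounded) then yields twice differentiability almost everywhere. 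Finally, at any $\varepsilon$ where $c''(\varepsilon)$ exists I would replace the Lagrange remainder by the exact second-order expansions of both sides of $-c(\varepsilon+s)\le g(1+s/\varepsilon)$, cancel the matching zeroth- and first-order terms, divide by $s^2>0$ and let $s\to0$; this produces the sharp pointwise bound $c''(\varepsilon)\ge -\varepsilon^{-2}\int v\cdot D_{vv}L(x,v)\cdot v\,d\mu$, which is \eqref{eq:lem:semiconvex-c-eps}.

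The main obstacle is analytic rather than conceptual: because $c$ is a priori only $C^1$, one cannot differentiate it twice directly, so both the qualitative and the quantitative conclusions must be extracted from the one-sided comparison with the smooth barrier $g$. The delicate point is the \emph{uniformity in $\varepsilon$} of the modulus $C\varepsilon^{-2}$, which hinges entirely on the $\varepsilon$-independent gradient bound of Lemma~\ref{lemma:bernstein}, and hence on the $\varepsilon$-independent support of $\mu$; without it $g''(1)$ could blow up as $\varepsilon\to0^+$ and the stated modulus would fail.
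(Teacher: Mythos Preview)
Your proposal is correct and follows essentially the same route as the paper: both exploit the scaling inequality $g(\lambda)=\int L(x,\lambda v)\,d\mu\ge -c(\lambda\varepsilon)$ with equality at $\lambda=1$, the $\varepsilon$-independent support bound coming from Lemma~\ref{lemma:bernstein} and Proposition~\ref{prop:background:unique-measures}(i), and a second-order comparison at $\lambda=1$. The only cosmetic difference is that the paper adds the two scaling inequalities at $1\pm\lambda$ to obtain the symmetric second difference $c((1+\lambda)\varepsilon)+c((1-\lambda)\varepsilon)-2c(\varepsilon)\ge -C\lambda^2$ directly (the first-order terms cancel by symmetry, so $c'(\varepsilon)$ is never invoked), whereas you reach the equivalent supporting-parabola form via Taylor's theorem with Lagrange remainder.
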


\begin{proof} If $\lambda > 0$ small then similar to \eqref{eq:scaling-eps-to-lambda-eps}, we have 
\begin{equation*}
\begin{aligned}
	-c((1+\lambda)\varepsilon)  \leq \int_{\mathbb{T}^n \times \mathbb{R}^n} L(x, (1+\lambda)v)\, d\mu  , \qquad
	-c((1-\lambda)\varepsilon)  \leq	\int_{\mathbb{T}^n \times \mathbb{R}^n} L(x, (1-\lambda)v)\, d\mu  
\end{aligned}
\end{equation*}
for all $\mu \in \mathcal{M}_0(\varepsilon)$. Using $\int_{\mathbb{T}^n \times \mathbb{R}^n} L(x, v)\, d\mu = -c(\varepsilon)$ for $\mu \in \mathcal{M}_0(\varepsilon)$ we obtain
\begin{equation}\label{eq:semiconvex-L-equation}
	-\Big(c((1+\lambda)\varepsilon)+c((1-\lambda)\varepsilon)-2c(\varepsilon)\Big) \leq \int_{\mathbb{T}^n \times \mathbb{R}^n} \Big(L(x, (1+\lambda)v) + L(x, (1-\lambda)v) - 2L(x,v)\Big) \,d\mu 
\end{equation}
for any $\mu\in \mathcal{M}_0(\varepsilon)$. Using $\Vert Du^\varepsilon\Vert_{L^\infty(\mathbb{T}^n)} \leq C$ (Lemma \ref{lemma:bernstein}) and $\mathrm{supp}\;\mu \subset \{(x,D_pH(x,Du^\varepsilon(x))): x\in \mathbb{T}^n\}$ from Proposition \ref{prop:background:unique-measures} (i), we see that
\begin{equation*}
	\sup \Big\lbrace \left|v\cdot D_{vv}L(x,sv)\cdot v\right|: (x,v)\in \;\mathrm{supp}\;\mu\;\text{and}\; |s|\leq 1 \Big\rbrace \leq C
\end{equation*}
for some $C$ independent of $\varepsilon$. From \eqref{eq:semiconvex-L-equation} we obtain
\begin{equation}\label{eq:bound-semiconvex-L}
	\int_{\mathbb{T}^n \times \mathbb{R}^n} \Big(L(x, (1+\lambda)v) + L(x, (1-\lambda)v) - 2L(x,v)\Big) \,d\mu \leq C\lambda^2.
\end{equation}
Let \(\lambda = \eta\varepsilon^{-1}\) for \(\eta > 0\). From \eqref{eq:bound-semiconvex-L}, we deduce that \(c(\varepsilon + \eta) + c(\varepsilon - \eta) - 2c(\varepsilon) + \big(C\varepsilon^{-2}\big)\eta^2 \geq 0\) for \(\eta > 0\). Therefore, \(\varepsilon \mapsto c(\varepsilon)\) is semiconvex with modulus \(C\varepsilon^{-2}\). Since a convex function in \(\mathbb{R}\) is differentiable everywhere except for a  set of measure zero, we conclude \eqref{eq:lem:semiconvex-c-eps} from \eqref{eq:semiconvex-L-equation}.
\end{proof}


Next, we show that \(\varepsilon \mapsto c(\varepsilon)\) is uniformly Lipschitz, and that this estimate extends to \(\varepsilon = 0\).

\begin{lem}\label{lem:boundedness-1st-derivative-c-eps} Assume \ref{itm:assumptions-p1}, \ref{itm:assumptions-p2}. Then there exists $C>0$ independent of $\varepsilon$ such that
\begin{equation*}
	|c'(\varepsilon)|\leq C, \qquad\text{for all}\;\varepsilon\in (0,1).
\end{equation*}
As a consequence, $|c(\varepsilon)-c(0)|\leq C\varepsilon$ for all $\varepsilon\in(0,1)$.
\end{lem}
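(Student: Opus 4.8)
The plan is to start from the representation $c'(\varepsilon) = -\int_{\mathbb{T}^n}\Delta u^\varepsilon(x)\,\theta^\varepsilon(x)\,dx$ established in Corollary \ref{corollary:C1-of-the-map-formula}, where $\theta^\varepsilon$ is the unique invariant density of Proposition \ref{prop:background:unique-measures}(ii), and to bound this integral uniformly in $\varepsilon\in(0,1)$. The difficulty to keep in mind throughout is that, from the equation \eqref{eq:Intro:cell-ergodic-2nd-order}, one has $\varepsilon\Delta u^\varepsilon = H(x,Du^\varepsilon)-c(\varepsilon)$, so $\Delta u^\varepsilon$ is only $\mathcal{O}(\varepsilon^{-1})$ pointwise, while $\theta^\varepsilon$ concentrates as $\varepsilon\to0^+$. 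Thus the desired $\mathcal{O}(1)$ bound cannot come from any absolute-value estimate (those give the known suboptimal rate $\mathcal{O}(\varepsilon^{-1/2})$); it must rest on a genuine cancellation between the positive and negative parts of $\Delta u^\varepsilon$ weighted against $\theta^\varepsilon$.

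For the lower bound $c'(\varepsilon)\ge -C$ I would first prove a uniform-in-$\varepsilon$ semiconcavity estimate $D^2u^\varepsilon\le C_1 I$, hence $\Delta u^\varepsilon\le nC_1$. This is the classical Bernstein/maximum-principle argument applied to the second directional derivative $z=u^\varepsilon_{\xi\xi}$: differentiating \eqref{eq:Intro:cell-ergodic-2nd-order} twice in a unit direction $\xi$ gives $\varepsilon\Delta z - D_pH\cdot Dz = D_{pp}H[D^2u^\varepsilon\xi,D^2u^\varepsilon\xi] + \mathcal{O}(1+|D^2u^\varepsilon\xi|)$, and the uniform convexity of $H$ on the compact gradient range $\{|p|\le C_0\}$ supplied by Lemma \ref{lemma:bernstein} makes the leading term coercive. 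Evaluating at an interior maximum of $z$ forces $|D^2u^\varepsilon\xi|\le C_1$ there, hence $z\le C_1$ everywhere with $C_1$ independent of $\varepsilon$. Since $\theta^\varepsilon\ge0$ and $\int_{\mathbb{T}^n}\theta^\varepsilon=1$, this immediately yields $c'(\varepsilon)=-\int\Delta u^\varepsilon\theta^\varepsilon\ge -nC_1$, and after integration (using continuity at $0$) the half-sided estimate $c(\varepsilon)\ge c(0)-nC_1\varepsilon$.

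The main obstacle is the matching upper bound $c'(\varepsilon)\le C$, equivalently $\int_{\mathbb{T}^n}(\Delta u^\varepsilon)_-\,\theta^\varepsilon\le C$: semiconcavity controls only $(\Delta u^\varepsilon)_+$, whereas $(\Delta u^\varepsilon)_-$ is unbounded precisely on the shock set of $u^\varepsilon$. The decisive point is that $\theta^\varepsilon$, being the invariant measure of the characteristic diffusion, charges the projected Mather/Aubry set — which is disjoint from the shock set and on which $u^\varepsilon$ enjoys \emph{two-sided} Hessian bounds — and charges the shock set negligibly. To make this quantitative I would exploit the scaling structure together with the Bernstein identity obtained by testing the equation for $\tfrac12|Du^\varepsilon|^2$ against $\theta^\varepsilon$; using the closedness of $\mu^\varepsilon\in\mathcal{C}(\varepsilon)$ this collapses to
\begin{equation*}
	\varepsilon\int_{\mathbb{T}^n}|D^2u^\varepsilon|^2\,\theta^\varepsilon\,dx = -\int_{\mathbb{T}^n}Du^\varepsilon\cdot D_xH(x,Du^\varepsilon)\,\theta^\varepsilon\,dx \le C.
\end{equation*}
Pairing this with the comparison of $\mu^\varepsilon$ against a classical Mather measure via Lemma \ref{lem:scale-v-stochastic} — where the quantity $\int v\cdot Du^\varepsilon\,d\mu$ to be controlled reduces to a closedness identity that is \emph{exact} (equal to $0$) at $\varepsilon=0$ — is where the improvement from $\mathcal{O}(\varepsilon^{1/2})$ to $\mathcal{O}(\varepsilon)$ must enter. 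I expect this concentration-versus-regularity balance on the Aubry set to be the crux of the whole lemma.

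Finally, once $|c'(\varepsilon)|\le C$ holds on $(0,1)$, the stated consequence is routine: for $0<\delta<\varepsilon$ the fundamental theorem of calculus gives $|c(\varepsilon)-c(\delta)|=\bigl|\int_\delta^\varepsilon c'(s)\,ds\bigr|\le C(\varepsilon-\delta)$, and letting $\delta\to0^+$ together with the continuity of $\varepsilon\mapsto c(\varepsilon)$ at $\varepsilon=0$ from Lemma \ref{lem:stability-MHeps}(iii) yields $|c(\varepsilon)-c(0)|\le C\varepsilon$.
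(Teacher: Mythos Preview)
Your lower-bound argument via semiconcavity is fine, but the upper bound $c'(\varepsilon)\le C$ is not proved: the identity $\varepsilon\int|D^2u^\varepsilon|^2\theta^\varepsilon = -\int Du^\varepsilon\!\cdot D_xH\,\theta^\varepsilon$ only yields $\int|D^2u^\varepsilon|^2\,d\mu\le C\varepsilon^{-1}$, hence $|c'(\varepsilon)|\le C\varepsilon^{-1/2}$, and the subsequent appeal to ``concentration-versus-regularity balance on the Aubry set'' and ``comparison against a classical Mather measure'' is a hope, not an argument. The improvement from $\mathcal{O}(\varepsilon^{1/2})$ to $\mathcal{O}(\varepsilon)$ is genuinely the content of the lemma, and nothing in your sketch supplies it.

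The step you are missing is simple and already implicit in what you wrote for the lower bound. You differentiated the equation twice in a direction $\xi$ to get, on the range $|Du^\varepsilon|\le C_0$,
\begin{equation*}
\bigl(D_pH(x,Du^\varepsilon)\cdot Dz-\varepsilon\Delta z\bigr)+\gamma|Du^\varepsilon_\xi|^2\le C,\qquad z=u^\varepsilon_{\xi\xi},
\end{equation*}
and then evaluated at an interior maximum of $z$. Instead, \emph{integrate} this inequality against $\mu\in\mathcal{M}_0(\varepsilon)$. Since $v=D_pH(x,Du^\varepsilon)$ on $\mathrm{supp}\,\mu$, the bracketed term becomes $\int(v\cdot Dz-\varepsilon\Delta z)\,d\mu$, which vanishes by the holonomic property (Definition \ref{defn:HolonomicMeasures}) applied to $\varphi=u^\varepsilon_\xi$ twice differentiated, i.e.\ to $\varphi=z$. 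What remains is $\gamma\int|Du^\varepsilon_\xi|^2\,d\mu\le C$ with no $\varepsilon$ in front. Summing over coordinate directions gives $\int|D^2u^\varepsilon|^2\,d\mu\le C$ uniformly in $\varepsilon$, and then Jensen (or Cauchy--Schwarz) yields
\begin{equation*}
|c'(\varepsilon)|=\left|\int\Delta u^\varepsilon\,d\mu\right|\le\left(\int|D^2u^\varepsilon|^2\,d\mu\right)^{1/2}\le C.
\end{equation*}
This handles both signs at once: no separate semiconcavity argument, no Aubry-set analysis, no comparison with $\varepsilon=0$ Mather measures. The point is that the closedness of $\mu$ kills \emph{exactly} the drift--diffusion term in the twice-differentiated equation, not merely the once-differentiated one, and that is what removes the stray factor of $\varepsilon$. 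Your final paragraph on passing to $\varepsilon=0$ is correct.
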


\begin{proof} Differentiating equation \eqref{eq:Intro:cell-ergodic-2nd-order} (for $p=0$) with respect to $x_i$ twice we obtain
\begin{equation*}
\begin{aligned}
	&\Big(D_pH(x, Du^\varepsilon(x))\cdot Du^\varepsilon_{x_ix_i}(x) - \varepsilon\Delta u^\varepsilon_{x_ix_i}(x)\Big) + \left(Du^\varepsilon_{x_i}(x)\right)^T\cdot D_{pp}H(x,Du^\varepsilon(x))\cdot \left(Du^\varepsilon_{x_i}(x)\right) \\
	&\qquad\qquad\qquad\qquad\qquad\qquad + \Big[D_{x_ix_i}H(x,Du^\varepsilon(x)) + 2D_{px_i}H(x,Du^\varepsilon(x))\cdot Du_{x_i}^\varepsilon(x)\Big] = 0, \qquad x\in \mathbb{T}^n.
\end{aligned}
\end{equation*}
From Lemma \ref{lemma:bernstein} (where \ref{itm:assumptions-p2} is necessary) we have $\Vert Du^\varepsilon\Vert_{L^\infty(\mathbb{T}^n)} \leq C$ where $C$ is independent of $\varepsilon>0$. By \ref{itm:assumptions-p1} and the fact that $D_{pp}H(x,p)$ is continuous, there must exists a constant $\gamma>0$ such that 
\begin{equation*}
	\gamma |\xi|^2 \leq \xi^T\cdot D_{pp}H(x,p)\cdot \xi, \qquad \text{for all}\;\xi\neq 0, x\in \mathbb{T}^n, |p|\leq C.
\end{equation*}
Using that we obtain
\begin{equation*}
	\gamma \big|Du^\varepsilon_{x_i}(x)\big|^2  \leq  \left(Du^\varepsilon_{x_i}(x)\right)^T\cdot D_{pp}H(x,Du^\varepsilon(x))\cdot \left(Du^\varepsilon_{x_i}(x)\right) , \qquad x\in \mathbb{T}^n.
\end{equation*}
By Cauchy-Schwartz inequality:
\begin{equation*}
	\big|2D_{px_i}H(x,Du^\varepsilon(x))\cdot Du_{x_i}^\varepsilon(x)\big| \leq \frac{2|D_{px_i}H(x,Du^\varepsilon(x))|^2}{\gamma} + \frac{\gamma}{2} |Du^\varepsilon_{x_i}|^2 \leq \frac{C}{\gamma} + \frac{\gamma}{2} |Du^\varepsilon_{x_i}|^2
\end{equation*}
for $x\in \mathbb{T}^n$. We deduce that for $x\in \mathbb{T}^n$ 
\begin{equation*}
	\big| D_{x_ix_i}H(x,Du^\varepsilon(x)) + 2D_{px_i}H(x,Du^\varepsilon(x))\cdot Du_{x_i}^\varepsilon(x) \big| \leq C + \frac{C}{\gamma} + \frac{\gamma}{2} |Du^\varepsilon_{x_i}|^2
\end{equation*}
for a constant $C$ depends only on $H$. Putting everything together we deduce that 
\begin{equation*}
	\Big(D_pH(x, Du^\varepsilon(x))\cdot Du^\varepsilon_{x_ix_i}(x) - \varepsilon\Delta u^\varepsilon_{x_ix_i}(x)\Big) + \frac{\gamma}{2} \big| Du^\varepsilon_{x_i}(x)\big| ^2 \leq C, \qquad x\in \mathbb{T}^n.
\end{equation*}
Taking integration against a viscosity Mather measure $\mu\in \mathcal{M}(\varepsilon)$, noting that $(x,v)\in \mathrm{supp}(\mu)$ if and only if $v = D_pH(x,Du^\varepsilon(x))$ due to Proposition \ref{prop:background:unique-measures}, we obtain 
\begin{equation*}
	\int_{\mathbb{T}^n \times \mathbb{R}^n}\Big(D_pH(x, Du^\varepsilon(x))\cdot Du^\varepsilon_{x_ix_i}(x) - \varepsilon\Delta u^\varepsilon_{x_ix_i}(x)\Big) \;d\mu = 0
\end{equation*}
due to $\mu$ is a closed measure in $\mathcal{C}(\varepsilon)$, therefore
\begin{equation*}
	\frac{\gamma}{2}\int_{\mathbb{T}^n \times \mathbb{R}^n} \big| Du^\varepsilon_{x_i}(x)\big| ^2 \;d\mu\leq C \qquad\Longrightarrow\qquad \int_{\mathbb{T}^n \times \mathbb{R}^n} \big| D^2 u^\varepsilon(x)\big| ^2 \;d\mu \leq \frac{Cn}{\gamma}.
\end{equation*}
As $\mu$ is a probability measures on $\mathbb{T}^n \times \mathbb{R}^n$, by Corollary \ref{corollary:C1-of-the-map-formula} we have
\begin{equation*}
	|c'(\varepsilon)| \leq \int_{\mathbb{T}^n \times \mathbb{R}^n} |\Delta u^\varepsilon(x)|\;d\mu \leq \left(\int_{\mathbb{T}^n \times \mathbb{R}^n} |\Delta u^\varepsilon(x)|^2\;d\mu \right)^{1/2} \leq \left(\int_{\mathbb{T}^n \times \mathbb{R}^n} |D^2 u^\varepsilon(x)|^2\;d\mu \right)^{1/2}  \leq \left(\frac{Cn}{\gamma}\right)^{1/2}.
\end{equation*}
Therefore the conclusion $|c(\varepsilon) - c(0)|\leq C\varepsilon$ follows form the fundamental theorem of calculus.
\end{proof}

\begin{rmk} For mechanical Hamiltonians $H(x,\xi) = |\xi|^p - V(x)$ with $p>1$ and $V\in C(\mathbb{T}^n)$, its Lagrangian satisfies $v\cdot D_vL(x,v) \geq 0$ for all $(x,v)\in \mathbb{T}^n\times \mathbb{R}^n$, therefore $c'(\varepsilon) \leq 0$ by Corollary \ref{corollary:C1-of-the-map-formula}, thus Lemma \ref{lem:boundedness-1st-derivative-c-eps} gives 
\begin{equation*}
	c(\varepsilon)  \leq c(0) \leq c(\varepsilon) + C\varepsilon
\end{equation*}
for $\varepsilon>0$. This has been already observed in \cite[Theorem 5.2 (i)]{evans_towards_2004} for $H(x,\xi) = \frac{1}{2}|\xi|^2 - V(x)$.
\end{rmk}

\begin{proof}[Proof of Theorem \ref{thm:regularity-eps}] 
The aforementioned Lemmas \ref{lem:one-sided}, \ref{corollary:C1-of-the-map-formula}, \ref{lem:semiconvex-c-eps}, and \ref{lem:boundedness-1st-derivative-c-eps} cover Theorem \ref{thm:regularity-eps}.
\end{proof}

\section{Proof of Theorem \ref{thm:direction-derivatives-in-p}: Directional derivatives of the effective Hamiltonian}\label{section:one-sided-regularity-0}

Similar to Proposition \ref{prop:background:unique-measures} (i), we note that for \(\varepsilon = 0\), it is well-known that classical Mather measures are supported on a compact subset of \(\mathbb{T}^n \times \mathbb{R}^n\). 

\begin{lem}\label{lem:MatherEpsZeroSupport} Assume \ref{itm:assumptions-p1}, \ref{itm:assumptions-p2}, and $\varepsilon=0$. Then any $\mu\in \mathcal{M}_p(0)$ satisfies
\begin{equation}\label{eq:SupportMatherA}
    \mathrm{supp}(\mu)\subset \left\{(x,v)\in \T^n\times \R^n: H(x,p+D_vL(x,v)) = \overline{H}(p)\right\}. 
\end{equation}
Consequently, if $|p|\leq R$ then there exists $C_R>0$ such that $\mathrm{supp}(\mu) \subset \T^n\times \overline{B}_{C_R}(0)$, a compact subset of $\T^n\times \R^n$. 
\end{lem}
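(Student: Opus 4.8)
The plan is to combine the variational characterization \eqref{eq:Intro:minimization-problem} with a viscosity solution of the cell problem \eqref{eq:Intro:cell-ergodic} used as a calibration, and to read the support off the equality case of the Fenchel--Young inequality. Write $L_p(x,v) := L(x,v) - p\cdot v$ for the $p$-shifted Lagrangian, so that $\mu \in \mathcal{M}_p(0)$ minimizes $\nu \mapsto \int_{\T^n\times\R^n} L_p\,d\nu$ over $\nu \in \mathcal{C}(0)$ with minimal value $-\overline{H}(p)$, and $\mu$ obeys the closedness constraint $\int_{\T^n\times\R^n} v\cdot D\varphi\,d\mu = 0$ for every $\varphi \in C^2(\T^n)$. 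The convex conjugate of $v\mapsto L_p(x,v)$ is $\xi \mapsto H(x,p+\xi)$, so the Fenchel--Young inequality reads
\begin{equation*}
    L_p(x,v) + H(x,p+\xi) \ge v\cdot\xi, \qquad (x,v,\xi)\in \T^n\times\R^n\times\R^n,
\end{equation*}
with equality precisely when $\xi = D_vL_p(x,v)$, equivalently $v = D_\xi H(x,p+\xi)$.

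First I would fix a viscosity solution $u$ of \eqref{eq:Intro:cell-ergodic}. Since $u$ is only Lipschitz, I would not differentiate it directly but work with the mollifications $u_\delta := u*\rho_\delta \in C^\infty(\T^n)$: convexity of $\xi\mapsto H(x,\xi)$ together with the $C^1$ dependence of $H$ on $x$ makes these approximate subsolutions, $H(x,p+Du_\delta(x)) \le \overline{H}(p) + C\delta$ on $\T^n$. The nonnegative Fenchel--Young gap
\begin{equation*}
    g_\delta(x,v) := L_p(x,v) - v\cdot Du_\delta(x) + H(x,p+Du_\delta(x)) \ge 0
\end{equation*}
then integrates, using $\int v\cdot Du_\delta\,d\mu = 0$ and $\int L_p\,d\mu = -\overline{H}(p)$, to $\int g_\delta\,d\mu \le C\delta$. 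Letting $\delta\to 0$ I expect $g_\delta \to g_0 := L_p(x,v) - v\cdot Du(x) + \overline{H}(p)$ and hence $\int g_0\,d\mu = 0$ with $g_0\ge 0$, so $g_0 = 0$ $\mu$-almost everywhere. Equality in Fenchel--Young gives $D_vL_p(x,v) = Du(x)$ on $\mathrm{supp}(\mu)$, whence $p+D_vL_p(x,v) = p+Du(x)$ and the cell equation yields $H(x,p+D_vL_p(x,v)) = \overline{H}(p)$, which is exactly \eqref{eq:SupportMatherA}.

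For the compactness consequence I would argue directly on the level set. As $p\mapsto\overline{H}(p)$ is locally bounded, for $|p|\le R$ there is $M_R$ with $\overline{H}(p)\le M_R$, and on $\mathrm{supp}(\mu)$ the momentum $\eta := p+D_vL_p(x,v) = p+Du(x)$ satisfies $H(x,\eta) = \overline{H}(p) \le M_R$. The superlinearity of $H$ in $\xi$ from \ref{itm:assumptions-p1} makes the sublevel sets $\{\eta : H(x,\eta)\le M_R\}$ uniformly bounded in $x$, so $|\eta|\le C_R'$ on $\mathrm{supp}(\mu)$. Finally the equality case gives $v = D_\xi H(x,\eta)$, and $D_\xi H$ is continuous, hence bounded on the compact set $\T^n\times\overline{B}_{C_R'}(0)$; this produces $|v|\le C_R$, i.e., $\mathrm{supp}(\mu)\subset\T^n\times\overline{B}_{C_R}(0)$.

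The only genuinely delicate point is the passage from $\int g_\delta\,d\mu \le C\delta$ to $g_0 = 0$ $\mu$-almost everywhere: because $u$ is merely Lipschitz, $Du_\delta\to Du$ only Lebesgue-almost everywhere, while the $x$-marginal of $\mu$ may be singular, so $g_\delta\to g_0$ $\mu$-a.e.\ is not automatic. To close it I would use the semiconcavity of $u$ under \ref{itm:assumptions-p1}, which by Mather's graph property forces $u$ to be differentiable on the projected support of $\mu$ and $\mu$ to be carried by the Lipschitz graph $v = D_\xi H(x,p+Du(x))$; combined with the uniform bound on $\{Du_\delta\}$ this legitimizes Fatou's lemma and the a.e.\ equality. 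Alternatively, one may cite classical Mather theory, where \eqref{eq:SupportMatherA} is the standard statement that minimizing measures are supported on the Mather set inside the energy level $\{H(x,\cdot) = \overline{H}(p)\}$, leaving only the short superlinearity argument for compactness.
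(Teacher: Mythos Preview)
Your approach is sound and, for the support statement \eqref{eq:SupportMatherA}, actually does more than the paper: the paper does not prove it at all but simply cites standard references (Fathi's book, Tran's book, Biryuk--Gomes). The argument you sketch---mollify a viscosity solution of the cell problem, integrate the nonnegative Fenchel--Young defect against $\mu$, use the holonomic constraint to kill the $v\cdot Du_\delta$ term, and read off the equality case---is essentially the proof one finds in those references. Your honest flag about the $\delta\to 0$ passage is exactly the subtle point in that argument, and your proposed fix via semiconcavity and the graph property is how it is handled there; your alternative of ``cite classical Mather theory'' is literally what the paper does. For the compactness consequence the two proofs coincide: superlinearity of $H$ bounds the momentum on $\mathrm{supp}(\mu)$, then Legendre duality bounds $v$. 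The paper phrases the last step as ``$|D_vL|$ bounded and $D_{vv}L$ positive definite $\Rightarrow |v|$ bounded''; you phrase it as ``$v=D_\xi H(x,\eta)$ with $\eta$ in a compact set''---these are the same statement read through the Legendre transform.

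One small point worth noting: your chain gives $p+D_vL_p(x,v)=D_vL(x,v)$, so the conclusion you actually reach is $H(x,D_vL(x,v))=\overline{H}(p)$, which is the standard Mather--theory formulation. The extra ``$p+$'' in the paper's display \eqref{eq:SupportMatherA} appears to be a slip; either version yields a uniform bound on $|D_vL(x,v)|$ for $|p|\le R$, so the compactness consequence is unaffected. Just be aware that ``which is exactly \eqref{eq:SupportMatherA}'' glosses over this discrepancy.
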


\begin{proof} For a proof of \eqref{eq:SupportMatherA}, we refer to \cite{fathi_weak_2008}, \cite[Lemma 7.13]{tran_hamilton-jacobi_2021} or \cite[Theorems 3 and 37]{biryuk_introduction_2010}. For $p\in \R^n$ with $|p|\leq R$, we denote $\mathcal{S}_p= \{(x,v)\in \T^n\times \R^n: H(x,p+D_vL(x,v)) = \overline{H}(p)\}$. Since $p\mapsto \overline{H}(p)$ is continuous \cite{tran_hamilton-jacobi_2021}, we can find $C_1(R)>0$ such that $\overline{H}(q)\leq C_1(R)$ for all $|q|\leq 2R$. By \ref{itm:assumptions-p1}, there exists $C_2(R)>0$ such that if $(x,v)\in \mathcal{S}_p$ then
\begin{equation*}
    |p+D_vL(x,v)| \leq C_2(R)\qquad\Longrightarrow\qquad |D_vL(x,v)|\leq |p|+C_2(R) \leq R+C_2(R).
\end{equation*}
Since \(D_{vv}L(x,v)\) is positive definite in \(v\), as per \ref{itm:assumptions-p1}, there exists a constant \(C_R > 0\) such that 
\begin{equation*}
     |D_vL(x,v)| \leq R + C_2(R)\qquad\Longrightarrow\qquad 
     |v| \leq C_R.
\end{equation*}
In other words, we obtain that $\mathrm{supp}(\mu)\subset \mathcal{S}_p\subset \T^n \times \overline{B}_{C_R}(0)$ if $\mu\in \mathcal{M}_p(0)$ and $|p|\leq R$. 
\end{proof}

\begin{proof}[Proof of Theorem \ref{thm:direction-derivatives-in-p}] For $\varepsilon = 0$, $\mathcal{C}(0)$ is the set of holonomic measures, i.e., probability measures such that 
\begin{equation*}
	\int_{\mathbb{T}^n \times \mathbb{R}^n} v\cdot D\phi(x) \;d\mu(x,v) = 0 \qquad\text{for all}\;\phi\in C^1(\mathbb{T}^n).
\end{equation*}
For $p\in \mathbb{R}^n$, the Legendre transform of $(x,\xi)\mapsto H(x,p+\xi)$ is the \emph{shifted} Lagrangian $L(x,v) -p\cdot v$ for $(x,v)\in \mathbb{T}^n \times \mathbb{R}^n$. We have 
\begin{equation}\label{eq:thm:one-sided-derivatives-defn-Mane-Mather-eps=0}
	 -\overline{H}(p) = \inf_{\mu\in \mathcal{C}(0)} \int_{\mathbb{T}^n\times \mathbb{R}^n} \big( L(x,v)-p\cdot v\big)\;d\mu.
\end{equation}
Let $\mathcal{M}_p(0)$ be the set of Mather measures associated to \eqref{eq:Intro:cell-ergodic-2nd-order} with $\varepsilon=0$, i.e., those measures in $\mathcal{C}(0)$ that minimize \eqref{eq:thm:one-sided-derivatives-defn-Mane-Mather-eps=0}. Let $\xi\in \mathbb{R}^n$ be a direction, we compute the one-sided derivatives $D_{\xi\pm}\overline{H}(p)$. We have
\begin{equation*}
\begin{cases}
     \begin{aligned}
         &\int_{\mathbb{T}^n \times \mathbb{R}^n} \Big(L(x,v) - p\cdot v\Big)\;d\mu(x,v) = -\overline{H}(p),\\
         &\int_{\mathbb{T}^n \times \mathbb{R}^n} \Big(L(x,v) - (p+t\xi)\cdot v\Big)\;d\mu(x,v) \geq -\overline{H}(p+t\xi),
     \end{aligned}
\end{cases} \qquad \text{for all}\;\mu \in \mathcal{M}_p(0). 
\end{equation*}
We deduce that 
\begin{equation}\label{eq:mu_Mp}
     \overline{H}(p+t\xi)-\overline{H}(p)  \geq t\int_{\mathbb{T}^n \times \mathbb{R}^n} \xi \cdot v\;d\mu(x,v)\qquad \text{for all}\;\mu \in \mathcal{M}_p(0).
\end{equation}
On the other hand, choose a sequence $\mu_t \in \mathcal{M}_{p+t\xi}(0)$ for $t>0$, then 
\begin{equation*}
\begin{cases}
     \begin{aligned}
         &\int_{\mathbb{T}^n \times \mathbb{R}^n} \Big(L(x,v) - p\cdot v\Big)\;d\mu_t(x,v) \geq -\overline{H}(p),\\
         &\int_{\mathbb{T}^n \times \mathbb{R}^n} \Big(L(x,v) - (p+t\xi)\cdot v\Big)\;d\mu_t(x,v) = -\overline{H}(p+t\xi).
     \end{aligned}
\end{cases}
 \end{equation*}
Therefore 
\begin{equation}\label{eq:mu_M-p-t}
     \overline{H}(p+t\xi) -\overline{H}(p) \leq t\int_{\mathbb{T}^n \times \mathbb{R}^n} \xi \cdot v\;d\mu_t(x,v), \qquad \mu_t \in \mathcal{M}_{p+t\xi}(0).
\end{equation}
From \eqref{eq:mu_Mp} as $t\to 0^+$, we obtain 
\begin{equation}\label{eq:directional_liminf}
	\liminf_{t\to 0^+} \frac{\overline{H}(p+t\xi) - \overline{H}(p)}{t} \geq \max_{\mu\in \mathcal{M}_p(0)} \int_{\mathbb{T}^n \times \mathbb{R}^n} \xi \cdot v\;d\mu(x,v).
\end{equation}

For \(t \in (0,1)\) small enough, we have $|p+t\xi| \leq R$ for some $R>0$, and thus \(\mu_t\) is supported on a common compact subset of \(\T^n \times \R^n\) by Lemma \ref{lem:MatherEpsZeroSupport}. Hence, \(\{\mu_t\}\) has a weak\(^*\) convergent subsequence, \(\mu_t \rightharpoonup \mu^+\) as \(t \to 0^+\). By Lemma \ref{lem:stability-MHeps}, \(\mu^+ \in \mathcal{M}_p(0)\). Using $\mu_{t}\rightharpoonup \mu^+$ as $t\to 0^+$ in \eqref{eq:mu_M-p-t}, we obtain 
\begin{equation}\label{eq:directional_limsup}
    \limsup_{t\to 0^+} \frac{\overline{H}(p+t\xi) - \overline{H}(p)}{t} \leq  \int_{\mathbb{T}^n \times \mathbb{R}^n} \xi \cdot v\;d\mu^+(x,v), \qquad \mu^+ \in \mathcal{M}_p(0).
\end{equation}
From \eqref{eq:directional_liminf} and \eqref{eq:directional_limsup}, we obtain
\begin{equation*}
	D_{\xi+}\overline{H}(p)= \lim_{t\to 0^+} \frac{\overline{H}(p+t\xi) -\overline{H}(p)}{t} = \max_{\mu\in \mathcal{M}_p(0)} \int_{\mathbb{T}^n \times \mathbb{R}^n} \xi \cdot v\;d\mu(x,v).
\end{equation*}
By a similar argument, as $t\to 0^-$, we obtain 
\begin{equation*}
\begin{aligned}
     D_{\xi-}\overline{H}(p)= &\lim_{t\to 0^-} \frac{\overline{H}(p+t\xi) -\overline{H}(p)}{t} = \min_{\mu\in \mathcal{M}_p(0)} \int_{\mathbb{T}^n \times \mathbb{R}^n} \xi \cdot v\;d\mu(x,v).
 \end{aligned}
\end{equation*}
Finally, if \(\mathcal{M}_p(0) = \{\mu\}\) is a singleton, we deduce that \(p \mapsto \overline{H}(p)\) has a directional derivative in every direction \(\xi \in \mathbb{R}^n\).  Since $p\mapsto \overline{H}(p)$ is convex \cite[Theorem 4.9]{tran_hamilton-jacobi_2021}, its subgradient $\partial \overline{H}(p) \neq \emptyset$ for any $p\in \R^n$. We show that $\partial \overline{H}(p)$ is a singleton. Indeed, if $q_1,q_2 \in \partial \overline{H}(p)\subset\mathbb{R}^n$, then for any direction $\xi \in \mathbb{R}^n$, we have 
\begin{equation*}
	D_\xi \overline{H}(p) = q_1\cdot \xi = q_2\cdot\xi = \int_{\T^n\times \R^n} \xi\cdot v\;d\mu(x,v). 
\end{equation*}
This holds for all \(\xi \in \mathbb{R}^n\); therefore, \(q_1 = q_2\), and thus \(p \mapsto \overline{H}(p)\) is differentiable at \(p\).
\end{proof}

\section*{Acknowledgments} The authors extend their gratitude to Professors Hung Tran, Hiroyoshi Mitake, Yifeng Yu, and Diogo Gomes for valuable discussions, insightful comments, and for sharing relevant references. They also thank the anonymous referee for numerous helpful suggestions that enhanced the clarity of the paper.

\bibliography{refs.bib}{}
\bibliographystyle{acm}

\end{document}